\DeclareSymbolFont{cyrletters}{OT2}{wncyr}{m}{n}
\DeclareMathSymbol{\Sha}{\mathalpha}{cyrletters}{"58}
\numberwithin{equation}{section}
\newtheorem{theorem}{Theorem}[section]
\newtheorem{lemma}[theorem]{Lemma}
\newtheorem{proposition}[theorem]{Proposition}
\newtheorem{corollary}[theorem]{Corollary}
\newtheorem{definition}[theorem]{Definition}
\newtheorem{remark}[theorem]{Remark}
\newtheorem*{notat*}{Notation}
\newcommand{\Z}{\mathbb{Z}}
\renewcommand{\O}{\mathcal{O}}
\renewcommand{\r}{\rightarrow}
\newcommand{\et}{\mathrm{\acute{e}t}}
\newcommand{\Spec}{\mathrm{Spec}}
\newcommand{\ul}{\underline}
\DeclareMathOperator*{\colim}{colim}
\begin{document}
\title[On the Gersten conjecture]{On the relative Gersten conjecture for Milnor $K$-theory in the smooth case}
\author{Morten Lüders}
\address{Universität Heidelberg,
Mathematisches Institut,
Im Neuenheimer Feld 205,
69120 Heidelberg,
Germany}
\email{mlueders@mathi.uni-heidelberg.de}
\subjclass[2010]{19D45; Higher symbols, Milnor $K$-theory.}
\thanks{The author is supported by the DFG Research Fellowship LU 2418/1-1.}

\begin{abstract} 
We show that the Gersten complex for the (improved) Milnor K-sheaf on a smooth scheme over an excellent discrete valuation ring is exact except at the first place and that exactness at the first place may be checked at the discrete valuation ring associated to the the generic point of the special fibre. This complements results of Gillet-Levine for $K$-theory, Geisser for motivic cohomology and Schmidt-Strunk and the author for \'etale cohomology. \end{abstract}

\maketitle
\setcounter{page}{1}
\setcounter{section}{0}
\pagenumbering{arabic}

\section{Introduction} 
Let $\mathcal{O}_K$ be a discrete valuation ring with residue field $k$ and local parameter $\pi$. Let $X$ be a smooth scheme of dimension $d$ over $\Spec(\mathcal{O}_K)$. In \cite{GL87} Gillet-Levine show that the Gersten complex
$$0\rightarrow \mathcal{K}_{n,X}\xrightarrow{} \bigoplus_{x\in X^{(0)}}i_{x*} K_n(x)\rightarrow \bigoplus_{x\in X^{(1)}} i_{x*}K_{n-1}(x)\rightarrow...\r \bigoplus_{x\in X^{(d)}}i_{x*}K_{n-d}(x)\r 0 $$
for algebraic $K$-theory is exact except at the first and third place. The Gersten conjecture for algebraic $K$-theory says that this complex is exact everywhere and Gillet-Levine show that this conjecture holds if it holds for the discrete valuation ring associated to the generic point of the special fibre of $X$. There are analogous results for motivic cohomology by Geisser \cite[Sec. 4]{Ge04}, for Bloch-Ogus theory by the author \cite{LudersBlochOgus} and under some additional assumptions by Schmidt-Strunk for $\mathbb{A}^1$-invariant cohomology theories with Nisnevich descent such as \'etale cohomology with torsion coefficients prime to the residue characteristic \cite{SS19}. We will say that in these cases the relative Gersten conjecture holds.
 
In this article we prove the analogous result for Milnor $K$-theory. Let $\hat{K}^M_n$ denote the improved Milnor $K$-theory defined in \cite{Ke10} and $\hat{\mathcal{K}}^M_{n,X}$ its sheafification. $\hat{K}^M_n$ coincides with usual Milnor $K$-theory for fields and local rings with big residue fields. The main difference between (improved) Milnor $K$-theory and the other theories mentioned above is that it does not have a localization sequence. 
\begin{theorem}(Corollary \ref{maincorollary})
Let the notation be as above. Assume furthermore that $\O_K$ is excellent. Then the sequence of sheaves
$$\hat{\mathcal{K}}^M_{n,X}\xrightarrow{i} \bigoplus_{x\in X^{(0)}}i_{x*}K^M_n(x)\rightarrow \bigoplus_{x\in X^{(1)}}i_{x*}K^M_{n-1}(x)\r... \bigoplus_{x\in X^{(d)}}i_{x*}K^M_{n-d}(x)\r 0$$
is exact. Furthermore, if $\O_{X,\eta}$ is the discrete valuation ring induced by a generic point of the special fibre, then $i$ is injective if the map
$$\hat{K}^M_{n}(\O_{X,\eta})\r K^M_{n}(\mathrm{Frac}(\O_{X,\eta}))$$
is injective for all such $\eta$.
\end{theorem}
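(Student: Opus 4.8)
The plan is to reduce the statement to a local assertion about the Gersten complex of the regular local rings $A = \O_{X,x}$ and then to adapt Kerz's proof of the Gersten conjecture for Milnor K-theory over a field to the present relative setting. Since exactness of a complex of Nisnevich (or Zariski) sheaves can be checked on stalks, it suffices to show, for every $x \in X$, that the Gersten complex of $A$ is exact at each term to the right of $\hat{K}^M_n(A)$, and that $\hat{K}^M_n(A) \to K^M_n(\mathrm{Frac}(A))$ is injective under the stated hypothesis. Because $X$ is smooth over $\O_K$ every $A$ is regular, and I would distinguish two cases according to whether $x$ lies in the generic fiber $X_K$ or meets the special fiber $X_k$.

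For $x \in X_K$ the parameter $\pi$ is invertible in $A$, so $A$ is essentially smooth over the field $K$; Kerz's theorem then gives exactness of the whole complex, injectivity of $i$ included, and nothing further is required. The substance of the argument lies at points meeting the special fiber, where $A$ is genuinely of mixed characteristic. Here I would pass to the henselization and apply a relative form of Gabber's geometric presentation lemma --- this is where excellence of $\O_K$ enters --- to present $A$ as essentially \'etale over an affine space $\mathbb{A}^d_{\O_K}$ in such a way that a prescribed closed subset becomes finite over $\mathbb{A}^{d-1}_{\O_K}$; this is the device that lowers the codimension by one through an $\mathbb{A}^1$-fibration. Feeding this into the homotopy invariance of improved Milnor K-theory and the compatibility of the residue and transfer maps yields exactness in the interior of the complex together with surjectivity at its right-hand end, in parallel with the equicharacteristic proof but now carried out over $\O_K$.

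The real obstacle, and the reason for the conditional formulation, is the injectivity of $i$. Since Milnor K-theory possesses no localization sequence, one cannot split off the special fiber by a long exact sequence as in the treatments of K-theory, motivic cohomology, or \'etale cohomology, and the kernel of $\hat{K}^M_n(A) \to K^M_n(\mathrm{Frac}(A))$ must instead be controlled directly. Using the presentation lemma together with specialization to the generic point of $X_k$, I would reduce this kernel to the single codimension-one point $\eta$, whose local ring $\O_{X,\eta}$ is the governing mixed-characteristic discrete valuation ring. The assumption that $\hat{K}^M_n(\O_{X,\eta}) \to K^M_n(\mathrm{Frac}(\O_{X,\eta}))$ be injective is then exactly what excludes a class that dies generically but survives integrally. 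I expect this last reduction --- transporting a relation from an arbitrary local ring meeting the special fiber down to the single discrete valuation ring $\O_{X,\eta}$, without recourse to a localization sequence --- to be the main difficulty and the principal new ingredient of the proof.
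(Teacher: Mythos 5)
Your high-level skeleton --- stalkwise reduction, the split into generic-fiber and mixed-characteristic points, a relative Gabber-style presentation over $\mathbb{A}^d_{\O_K}$, induction on relative dimension through $\mathbb{A}^1$-fibrations, and the reduction of the injectivity of $i$ to the single discrete valuation ring $\O_{X,\eta}$ --- does match the architecture of the paper (Theorems \ref{thm1intext}, \ref{thm2intext} and \ref{thmrightexactintext}), and you correctly identify why only the first place of the complex remains conditional. But there is a genuine gap in your argument for the unconditional part, i.e. exactness at all places past the first. Your plan is to present a prescribed closed subset of $X$ so that it becomes finite (or quasi-finite) over $\mathbb{A}^{d-1}_{\O_K}$ and then run the chain-homotopy ("Quillen's trick") argument as in the equicharacteristic case. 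This only works when the prescribed closed subscheme $Y\subset X$ is \emph{horizontal}, i.e. flat over $\O_K$: the relative Noether normalization (Lemma \ref{geomrep1}, from Gillet--Levine) explicitly requires flatness, and for a good reason. A \emph{vertical} subscheme --- say a subvariety of the special fiber $X_k$, which is where most of the support of the Gersten complex in positive codimension lives --- maps into the special fiber $\mathbb{A}^{d-1}_k$; a vertical divisor has dimension $d$ while $\mathbb{A}^{d-1}_k$ has dimension $d-1$, so no quasi-finite projection exists. Classes supported in vertical position are therefore not reached by your argument at all, and they cannot simply be ignored.

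The paper closes exactly this hole, and this is the main new ingredient of Theorem \ref{thmrightexactintext}: the chain homotopy of Lemma \ref{mainlemma} is stated and proved \emph{only} for $Y$ flat over $\O_K$, and the proof of the theorem then shows that this suffices because every class in vertical position can be moved into horizontal position modulo boundaries. Concretely, by \cite[Lem. 7.1]{SS10} one finds through any point $y$ of the special fiber a horizontal integral subscheme $Z$ regular at $y$, and a Chinese-remainder/symbol-lifting computation produces, for any symbol $\{\bar{\alpha}_1,\dots,\bar{\alpha}_n\}\in K^M_n(y)$, a symbol $\{\alpha_1,\dots,\alpha_n,\tilde{\pi}\}$ on $Z$ whose boundary is exactly the given class (and $0$ on the other components of the special fiber of $Z$). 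Without this moving step your proof of exactness at the interior and right-hand places fails. Two smaller deviations: the paper never passes to henselizations --- everything is done for Zariski (semi-)local rings, with Kerz's co-Cartesian Nisnevich-descent squares (Theorem \ref{Nisdescent}) transferring kernels between $X$ and $\mathbb{A}^d_{\O_K}$, and the relative Milnor--Bass--Tate sequence (Theorem \ref{milnorbasstate}) driving the induction on $d$; and the exactness at the second place is unconditional not by your fibration argument alone but because the residue sequence for the DVR $\O_{X,\eta}$ is always exact at that spot by \cite[Prop. 7.1.7]{GS06}, which is what lets Theorem \ref{thm2intext} splice the vertical residue at $\eta$ back in.
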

Philosophically the situation may be summed up as follows: the Gersten complex for Milnor $K$-theory is exact when restricting to elements which are, or can be moved into, horizontal position relative to the base. This reduces the exactness of the Gersten complex to the case of a discrete valuation ring in which case the Gersten complex is known to be exact except at the first place. This will get clearer in the proofs. 


A few remarks are in order:
\begin{remark}\label{remark_introduction}
\begin{enumerate}
\item If $\O_K$ is a field, then the Gersten conjecture holds for (improved) Milnor $K$-theory: Kerz \cite{Ke09,Ke10} proved exactness at the first place, Elbaz-Vincent-Müller-Stach \cite{ElbazMuller2002} and Gabber (unpublished) proved exactness at the second place and again Gabber proved exactness on the right (see \cite[Sec.  6]{Ro96}). If $\O_K$ is a field, then the Gersten conjecture holds furthermore for motivic cohomology, higher Chow groups, algebraic $K$-theory and \'etale cohomology with torsion coefficients by work of Voevodsky \cite{VSF00}, Bloch \cite{Bl86}, Quillen \cite{Qu72} and Bloch-Ogus \cite{BO74}, respectively. Using a method of Panin \cite{Pa03}, these results can be extended to any local ring $A$ which is regular, connected and equicharacteristic. 
For an axiomatic setup see the article \cite{CHK97} of Colliot-Th\'el\`ene-Hoobler-Kahn.
\item For smooth schemes over discrete valuation rings the relative Gersten conjecture for algebraic $K$-theory and motivic cohomology implies the Gersten conjecture for these theories with finite coefficients (see Geisser \cite[Thm. 8.2]{GL00} and Geisser-Levine \cite[Cor. 4.3]{Ge04}). These applications use that Milnor $K$-theory has the special property that one can ``lift symbols'' from the third to the second place of the Gersten complex for a DVR.
One cannot prove an analogous result for Milnor $K$-theory with finite coefficients due to the mentioned lack of a localization sequence. However, by comparison with $K$-theory one may obtain the following weaker result (see \cite[Prop. 3.2]{Lu17'} by the author): let $(m,(n-1)!)=1$ (for example $m=p^r$ and $p>(n-1)$). Then the Gersten conjecture holds for $\hat{\mathcal{K}}^M_n/m$.
\item Let $\O_{X,x}^h$ be a henselian local ring with residue field $k(x)$ associated to a point $x$ of a smooth scheme $X$ over  a discrete valuation ring. Let $l\in \mathbb{N}_{>0}$ be prime to $\mathrm{ch}(k(x))$. Then the Gersten conjecture for Milnor $K$-theory with mod $l$-coefficients holds in the Nisnevich topology. This follows from the Gersten conjecture for \'etale cohomology with torsion coefficients prime to the residue characteristic and the following sequence of rigidity and Bloch-Kato isomorphisms: 
$$K^M_n((\O_{X,x}^h))/l\cong K^M_n(k(x))/l\cong H^n_{\et}(k(x),\Z/l\Z(n))\cong H^n_{\et}((\O_{X,x}^h),\Z/l\Z(n)).$$ 
For a proof of the rigidity of Milnor $K$-theory see \cite[Prop. 3.7]{LudersBlochOgus}.
The same result also holds with coefficients which are not prime to $\mathrm{ch}(k(x))$. This is much more subtle though and is the main result of the author and Morrow in \cite{LuMo20}.
\end{enumerate}
\end{remark}


\subsection*{Acknowledgements.} I would like to thank Matthew Morrow for pointing out the first part of Remark \ref{remark_introduction}(4), explaining facts about the DVR case to me and many helpful discussions on the Gersten conjecture, also in the context of \cite{LuMo20}. I would like to thank Christian Dahlhausen for discussions on Section \ref{Gerstenreldimzero}. 

\section{Milnor $K$-theory, Chow groups with coefficients and geometric representation}




\subsection{Nisnevich descent and $\mathbb{A}^1$-invariance for Milnor $K$-theory}
We adopt the usual definition of Milnor $K$-theory, even for non-local rings:

\begin{definition}[Milnor $K$-theory]
Let $R$ be a (always commutative) ring. We define the $n^\mathrm{th}$ Milnor K-group $K^M_n(R)$ to be the quotient of $(R^{\times})^{\otimes n}$ by the Steinberg relations, i.e. the subgroup of $(R^{\times})^{\otimes n}$ generated by elements of the form $a_1\otimes\cdots\otimes a_n$ where $a_l+a_k=1$ for some $1\leq l<k\leq n$. As usual, the image of $a_1\otimes\cdots\otimes a_n$ in $K^M_n(R)$ is denoted by $\{a_1,\dots,a_n\}$.
\end{definition}

The Milnor $K$-theory sheaf is the sheafification in the Zariski topology of the above presheaf of abelian groups on the category of affine schemes. For a scheme $X$ we denote the restriction of the Milnor $K$-theory sheaf to the Zariski site of $X$ by $\mathcal K^M_{n,X}$. 
Associated to this sheaf, there exists an improved Milnor $K$-theory sheaf $\hat{\mathcal{K}}_{n,X}^M$ which is a modification the usual Milnor $K$-theory sheaf if the the residue fields of $X$ are small. In contrast to the original Milnor $K$-theory sheaf it has a transfer (also called norm map) for all finite \'etale extensions of local rings, not just if the residue fields are infinite. If $R$ is a local ring, then there is a surjection $K^M_n(R)=\mathcal K^M_{n,\Spec(R)}(R)\twoheadrightarrow \hat{\mathcal{K}}_{n,\Spec(R)}^M(R)=:\hat{K}^M_n(R)$ and this surjection is an isomorphism if the residue field of $R$ is infinite. The two groups coincide if $R$ is a field. 
The Gersten conjecture for Milnor $K$-theory says that for any regular local ring $R$, and $X=\Spec(R)$ of dimension $d$, the complex
$$0\to \hat{K}^M_n(R)\to K^M_n(\mathrm{Frac}(R))\r \bigoplus_{x\in X^{(1)}}K^M_{n-1}(x)\r ...\r \bigoplus_{x\in X^{(d)}}K^M_{n-d}(x)\r 0$$
is exact. Here $K^M_n(x):=K^M_n(\kappa(x))$. One expects the improved Milnor $K$-theory sheaf to satisfy the Gersten conjecture, but in general not the usual Milnor $K$-theory sheaf if the residue field of $R$ small. For more details see \cite{Ke10}, in particular for the precise sense of small we refer to \cite[Prop. 10(5)]{Ke10}.

In the axiomatic approach to the Gersten conjecture of Colliot-Th\'el\`ene-Hoobler-Kahn in \cite{CHK97} there are two main steps: Nisnevich descent and homotopy invariance. The proofs of the main results of Section \ref{section_5} may be interpreted as proceeding in an analogous fashion. For these proofs we need to recall two theorems which are due to Kerz and which were also used in this way in his proof of the Gersten conjecture for Milnor $K$-theory in equal characteristic in \cite{Ke09}: Milnor $K$-theory satisfies descent for Nisnevich-distinguished squares (Thm. \ref{Nisdescent}) and there is a generalisation of the Milnor-Bass-Tate exact sequence (Thm. \ref{milnorbasstate_fields}) to local rings (Thm. \ref{milnorbasstate}). The latter is central after having used Nisnevich descent (and geometric representation; see Section \ref{section_geom_rep}) to reduce the Gersten conjecture to affine space.

\begin{theorem}(\cite[Thm. 3.1]{Ke09})\label{Nisdescent}
Let $A\subset A'$ be a local ring extension of semi-local rings with infinite residue fields (i.e. $\Spec(A')\r \Spec(A)$ is dominant, maps closed points to closed points and is a surjective map on closed pints). Let $0\neq f,f_1\in A$ be such that $f_1|f$ and $A/(f)\cong A'/(f).$ Then the diagram
\[
  \xymatrix{ 
  K_n^M(A_{f_1}) \ar[r] \ar[d] & K_n^M(A_{f}) \ar[d] \\
  K_n^M(A'_{f_1}) \ar[r] & K_n^M(A'_{f}) 
  }
\]
is co-cartesian. 
\end{theorem}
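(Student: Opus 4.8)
The plan is to reformulate the co-cartesian property as an excision statement and then to establish excision by hand, the difficulty being precisely that Milnor $K$-theory has no localisation sequence with which to identify the relevant relative groups. For a commutative square of abelian groups
\[
\begin{CD}
K^M_n(A_{f_1}) @>\alpha>> K^M_n(A_{f}) \\
@VVV @VV\gamma V \\
K^M_n(A'_{f_1}) @>\delta>> K^M_n(A'_{f})
\end{CD}
\]
the pushout property holds if and only if the map $\ol\gamma\colon \mathrm{coker}(\alpha)\r\mathrm{coker}(\delta)$ induced on horizontal cokernels by $\gamma$ is an isomorphism. So I would first reduce to proving this single isomorphism. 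Writing $f=f_1g$ and using that $f_1$ becomes a unit in all four rings, so that $(f)=(g)$ in each localisation and $A_f=(A_{f_1})_g$, $A'_f=(A'_{f_1})_g$, the hypothesis $A/(f)\cong A'/(f)$ descends to an isomorphism of the common closed fibre $A_{f_1}/(g)\cong A'_{f_1}/(g)$. Thus $\ol\gamma$ is an excision statement: the inclusion $A\subset A'$ should identify the ``part of $K^M_n(A_f)$ supported on $V(g)$'' with its primed counterpart, and the only geometric input is the isomorphism of closed fibres (upgraded, in the \'etale geometric situation, to compatible isomorphisms modulo $g^N$ for all $N$ by formal \'etaleness).

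For surjectivity of $\ol\gamma$ I would first prove the unit factorisation $(A'_f)^{*}=(A_f)^{*}\cdot(A'_{f_1})^{*}$. Any $u\in(A'_f)^{*}$ can be written $u=b/g^m$ with $b\in A'_{f_1}$ and $V(b)\subseteq V(g)$; using the closed-fibre isomorphism one approximates $b$ by an element pulled back from $A_{f_1}$ and splits off a factor lying in $(A'_{f_1})^{*}$. Since $K^M_n$ is generated by symbols of units, multilinearity then expands any symbol of $A'_f$ into a symbol from $A_f$, a symbol from $A'_{f_1}$, and \emph{mixed} symbols $\{x_1,\dots,x_n\}$ in which some entries come from $(A_f)^{*}$ and the rest from $(A'_{f_1})^{*}$. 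Disposing of these mixed symbols is the heart of the matter: I would try to move an entry $c\in(A'_{f_1})^{*}$, via the congruences governed by the closed fibre, to one congruent to a unit pulled back from $A_{f_1}$, and then recombine using the Steinberg relation.

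The hard part is exactly this elimination of mixed symbols, together with the injectivity of $\ol\gamma$, and both collide with the absence of a localisation sequence. One cannot simply identify $\mathrm{coker}(\alpha)$ with $K^M_{n-1}(A_{f_1}/(g))$ through a residue map and then invoke the isomorphism of closed fibres, because no such exact sequence is available for Milnor $K$-theory; this is the structural defect stressed in the introduction. Instead I would filter the relative group by the $g$-adic order of the units entering the symbols and analyse the associated graded pieces, which by the continuity properties of Milnor $K$-theory (Bloch--Kato--Gabber type descriptions of relative $K^M$ in terms of differentials of the truncations $A_{f_1}/(g^N)$) are controlled entirely by the common closed fibre and hence agree on both sides of $\ol\gamma$. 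The partial exactness of the Gersten complex already known for regular semi-local rings and for discrete valuation rings would be used to pin down the bottom layer of the filtration. The genuine work is to make these graded identifications assemble, uniformly in $n$, into the single isomorphism $\ol\gamma$ without a localisation sequence to organise them.
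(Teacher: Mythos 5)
The paper itself contains no proof of this statement: it is imported verbatim from Kerz \cite[Thm.\ 3.1]{Ke09}, so your attempt has to stand on its own, and it does not. The first concrete error is the opening reduction. A commutative square of abelian groups is \emph{not} co-cartesian as soon as the induced map on horizontal cokernels is an isomorphism; the correct criterion is that $\ol\gamma\colon \mathrm{coker}(\alpha)\to\mathrm{coker}(\delta)$ is an isomorphism \emph{and} the induced map $\ker(\alpha)\to\ker(\delta)$ on horizontal kernels is surjective. For a counterexample to your version take $A=\Z$, $B=0$, $C=\Z$, $D=\Z/2$, with $\alpha=0$, $\gamma=0$, $\beta$ multiplication by $4$ and $\delta$ the projection: both cokernels vanish, yet the pushout is $\Z/4\neq\Z/2$. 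In your square the horizontal maps are the localization maps $K^M_n(A_{f_1})\to K^M_n(A_f)$ and $K^M_n(A'_{f_1})\to K^M_n(A'_f)$, and nothing known a priori controls their kernels --- injectivity of such localization maps is exactly the kind of statement the Gersten conjecture is about, and is precisely what the present paper is struggling to establish in Theorem \ref{thm1intext}. So the problem you reduce to is strictly weaker than the theorem, and the discarded kernel condition is of the same depth as what remains.

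Second, even the weakened problem is only described, not solved. The surjectivity step via the unit factorization $(A'_f)^{*}=(A_f)^{*}\cdot(A'_{f_1})^{*}$ is plausible and is indeed close in spirit to lemmas in Kerz's actual argument, but the two points you yourself single out as the heart of the matter --- elimination of mixed symbols and injectivity of $\ol\gamma$ --- are announced rather than executed (``I would try to move an entry\dots'', ``the genuine work is to make these graded identifications assemble''). Moreover, the machinery you propose for them does not exist in the required generality: descriptions of relative Milnor $K$-groups by differentials (van der Kallen, Bloch, Stienstra, Bloch--Kato--Gabber) require nilpotent ideals, complete rings, or characteristic hypotheses, whereas here the ideal $(g)$ is not nilpotent, the rings are merely semi-local, and the base is allowed to be of mixed characteristic. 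A telling symptom is that your sketch never invokes the hypothesis that the residue fields are infinite, which is essential in Kerz's proof (his symbol-by-symbol analysis relies on general-position arguments that fail over finite residue fields --- this failure is the very reason the present paper must use the improved Milnor $K$-sheaf $\hat{\mathcal{K}}^M_{n,X}$ elsewhere). A proposed proof of this statement that nowhere uses that hypothesis cannot be complete.
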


Milnor, Bass, and Tate proved the following theorem.
\begin{theorem}(\cite{Milnor1970}, \cite{BassTate1973})\label{milnorbasstate_fields}
Let $F$ be a field. There exists a split exact sequence 
$$0\r K^M_n(F)\r K^M_n(F(t))\r \bigoplus_{\pi}K^M_{n-1}(F[t]/(\pi))\r 0$$
where the direct sum is over all monic irreducible $\pi\in F[t]$.
\end{theorem}

In order to formulate the generalisation to local rings by Kerz we need to introduce some notation and definitions. In the following let $A$ be a semi-local domain which is factorial and has infinite residue field. Let $F$ be the quotient field of $A$. 
\begin{definition}\label{definition_feasible_tuples}
\begin{enumerate}
\item An $n$-tuple of rational functions 
$$(p_1/q_1,p_2/q_2,...,p_n/q_n)\in F(t)^n$$
with $p_i,q_i\in A[t]$ and $p_i/q_i$ a reduced fraction for $i=1,...,n$ is called feasible if the highest non vanishing coefficients of $p_i,q_i$ are invertible in $A$ and if for any irreducible factors $u$ of $p_i$ or $q_i$ and $v$ of $p_j$ or $q_j$ $(i\neq j)$, we have that $u=av$ with $a\in A^*$ or $(u,v)=1$. 
\item Let $$\mathcal{T}^t_n(A):=\Z<(p_1,...,p_n)|(p_1,...,p_n) \:feasible, p_i\in A[t]\: irreducible\: or\: unit>/Linear$$
Here $Linear$ denotes the subgroup generated by elements 
$$(p_1,...,ap_i,...,p_n)-(p_1,...,a,...,p_n)-(p_1,...,p_i,...,p_n)$$
with $a\in A^*$.

By bilinear factorization the element 
$$(p_1,...,p_n)\in \mathcal{T}^t_n(A)$$
is defined for every feasible $n$-tuple with $p_i\in F(t)$.

\item Let $$St\subset \mathcal{T}^t_n(A)$$ be the subgroup generated by feasible $n$-tuples $(p_1,...,p,p-1,...,p_n)$ and $(p_1,...,p,-p,...,p_n)$ with $p_i,p\in F(t)$.
\item Set $$K^t_n(A):=\mathcal{T}^t_n(A)/St.$$
\item Let $0\neq p\in A[t]$ be an arbitrary monic polynomial. Define 
$$K^t_n(A,p)$$
analogously to $K^t_n(A)$ but this time an $n$-tuple $(p_1/q_1,p_2/q_2,...,p_n/q_n)$ is feasible if additionally all $p_i,q_i$ are coprime to $p$.\footnote{One should think of $K^t_n(A)$ as the fraction field of the more local object $K^t_n(A,p)$. This will be important in the proof of Theorem \ref{thm1intext} \& \ref{thm2intext}.}
\end{enumerate}
\end{definition}

Then the following generalization of the Milnor-Bass-Tate sequence holds:
\begin{theorem}(\cite[Thm. 4.5]{Ke09})\label{milnorbasstate}
There exists a split exact sequence 
$$0\r K^M_n(A)\r K^t_n(A,p)\r \bigoplus_{\pi}K^M_{n-1}(A[t]/(\pi))\r 0$$
where the direct sum is over all monic irreducible $\pi\in A[t]$ with $(\pi,p)=1$.
\end{theorem}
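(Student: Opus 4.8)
The plan is to adapt Milnor's original computation of $K^M_*$ of a rational function field (the Bass--Tate sequence) to the combinatorially defined group $K^t_n(A,p)$, using a filtration by degree. Over a field $F$ the key geometric input is that $F[t]$ is a principal ideal domain, so that distinct monic irreducible polynomials are comaximal; over the factorial semi-local ring $A$ this fails, and the whole point of the \emph{feasibility} condition is to restore enough of this rigidity. For a feasible tuple the irreducible factors of distinct entries are either associate or coprime and the leading coefficients are units, which is exactly what is needed for the residue maps and the degree filtration to behave as in the classical case.

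First I would construct, for each monic irreducible $\pi\in A[t]$ with $(\pi,p)=1$, a tame symbol (residue) homomorphism
$$\partial_\pi\colon K^t_n(A,p)\r K^M_{n-1}(A[t]/(\pi))$$
by the usual $\pi$-adic tame symbol on generators. The content here is that $\partial_\pi$ is well defined: it must respect the $Linear$ relations and annihilate the Steinberg subgroup $St$, and the feasibility hypothesis is precisely what guarantees that the non-$\pi$ entries reduce to well-defined units contributing to $K^M_{n-1}(A[t]/(\pi))$. Assembling the $\partial_\pi$ gives the right-hand map of the asserted sequence; since a symbol coming from $K^M_n(A)$ consists of $\pi$-adic units, it has vanishing residue everywhere, so the composite $K^M_n(A)\r K^t_n(A,p)\r\bigoplus_\pi K^M_{n-1}(A[t]/(\pi))$ is zero and we have a complex.

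Next I would filter $K^t_n(A,p)$ by the subgroups $F_d$ generated by feasible tuples all of whose entries have degree $\le d$. One checks that $F_0$ is exactly the image of $K^M_n(A)$ (tuples of units), and that the degree-$d$ residues induce maps
$$F_d/F_{d-1}\r \bigoplus_{\deg\pi=d}K^M_{n-1}(A[t]/(\pi)).$$
The heart of the argument, and the step I expect to be the main obstacle, is to prove that these are isomorphisms. The inverse should send $\{\bar a_1,\dots,\bar a_{n-1}\}$ to the class of $(\pi,\tilde a_1,\dots,\tilde a_{n-1})$ for feasible lifts $\tilde a_i$ of the $a_i$; the difficulty is to show that the only relations surviving in the top-degree graded piece are the images of Steinberg relations over the residue ring $A[t]/(\pi)$, with no spurious relations coming from interactions between the entries of a tuple. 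Over a field this is handled by Euclidean division; here one must replace that argument by an induction on degree that uses factoriality of $A[t]$ together with feasibility to carry out the reduction modulo $\pi$ symbol by symbol.

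Granting the graded computation, exactness in the middle and surjectivity on the right follow formally from exhaustiveness of the filtration, with $F_0=K^M_n(A)$ mapping isomorphically onto $\ker\partial$, where $\partial=(\partial_\pi)$. For the splitting I would assemble the degreewise symbol-lifting maps $\{\bar a_1,\dots,\bar a_{n-1}\}\mapsto (\pi,\tilde a_1,\dots,\tilde a_{n-1})$ into a single homomorphism $\sigma\colon\bigoplus_\pi K^M_{n-1}(A[t]/(\pi))\r K^t_n(A,p)$ splitting $\partial$; that $\sigma$ is a well-defined section is part of the graded computation above. A choice of such $\sigma$ yields a decomposition $K^t_n(A,p)=i(K^M_n(A))\oplus\sigma\big(\bigoplus_\pi K^M_{n-1}(A[t]/(\pi))\big)$, which simultaneously shows that $i$ is split injective and that the whole sequence is split exact.
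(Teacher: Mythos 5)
The first thing to note is that the paper does not prove this statement at all: it is recalled verbatim from Kerz, and the citation \cite[Thm. 4.5]{Ke09} stands in for a proof. So the real comparison is with Kerz's argument, and your outline does follow the same route that Kerz (adapting Milnor and Bass--Tate) takes: residue maps $\partial_\pi$ defined on generators, a filtration of $K^t_n(A,p)$ by the degree of the entries, identification of the graded pieces with $\bigoplus_{\deg \pi = d} K^M_{n-1}(A[t]/(\pi))$ via a symbol-lifting map inverse to the residues, and the splitting assembled from those lifting maps. As architecture this is exactly right.

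As a proof, however, it has a genuine gap, which you flag yourself: everything after ``granting the graded computation'' is conditional on the one step that carries all of the difficulty, and two concrete points show that step has not really been engaged. First, over a field Milnor's well-definedness argument for the inverse map $\{\bar a_1,\dots,\bar a_{n-1}\}\mapsto (\pi,\tilde a_1,\dots,\tilde a_{n-1})$ rests on Euclidean division producing a \emph{canonical} representative of degree $<\deg\pi$ for each residue class; over the semi-local ring $A$ no canonical representative exists, so one must prove both that feasible lifts $\tilde a_i$ (unit leading coefficients, factors coprime or associate to the other entries, coprime to $p$) exist at all, and that the resulting class in $F_d/F_{d-1}$ is independent of every choice. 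Both existence and independence are general-position arguments that use the standing hypothesis that $A$ has \emph{infinite} residue fields --- a hypothesis your proposal never invokes, which is a reliable sign the crucial point is missing: for finite residue fields this machinery genuinely breaks down, and that failure is precisely why the improved Milnor K-theory $\hat{K}^M_n$ used elsewhere in this paper was introduced. Second, even the identification $F_0\cong K^M_n(A)$ (injectivity of the left-hand map, i.e.\ exactness on the left) is not free: it requires a retraction of $K^M_n(A)\to K^t_n(A,p)$, built in \cite{Ke09} from specialization maps whose construction again needs suitable evaluation points and hence infinite residue fields. In short, the proposal is the correct plan --- the same plan as the cited source --- but the theorem's actual content lies in the steps that are assumed rather than carried out.
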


The two theorems are compatible in the following sense:
\begin{proposition}(\cite[Prop. 4.7]{Ke09})
Let the notation be as above and $F=\mathrm{Frac}(A)$. There is a commutative diagram with exact rows
\[
  \xymatrix{ 
 0 \ar[r] & K^M_n(A) \ar[r] \ar[d]  & K^t_n(A,p) \ar[r] \ar[d] & \bigoplus_{\pi}K^M_{n-1}(A[t]/(\pi)) \ar[r] \ar[d] & 0 \\
 0 \ar[r] & K^M_n(F) \ar[r]  & K^M_n(F(t)) \ar[r] & \bigoplus_{\pi}K^M_{n-1}(F[t]/(\pi)) \ar[r] & 0 
  }
\]
in which the sum in the upper row is taken over all monic irreducible $\pi\in A[t]$ with $(\pi,p)=1$ and the sum in the lower row is taken over all monic irreducible $\pi\in F[t]$.
\end{proposition}

\subsection{Chow groups with coefficients}\label{subsection_chow_groups_with_coeff}
We introduce Chow groups with coefficients from \cite{Ro96} and recall some of their properties. These will be used in the next section to prove the exactness of the Gersten complex on the right. Note that in \cite{Ro96} Rost assumes that the base is a field. We need his results concerning the four basic maps (Section 4 in \textit{loc. cit.}, more precisely Lemmas \ref{lemma_correspondences_compatibility} and \ref{lemma_Rost45} and their references in our text below) for schemes over discrete valuation rings but we formulate them in the largest generality possible. 
In this section let $S$ be an excellent\footnote{In order for the Gersten complex of a noetherian scheme $X$ to be a complex one needs to assume $X$ to be excellent (see \cite{Ka83} for the case of varieties over a field and \cite[Sec. 8.1]{GS06} for the more general case of excellent noetherian schemes). Throughout the article we assume that the base scheme $S$ is excellent which implies that all the schemes which we consider and which are of finite type over $S$ are excellent.} noetherian scheme and $X$ be a $d$-dimensional scheme of finite type over $S$. Let
$$C_p(X,n):= \bigoplus_{x\in X_{(p)}}K^M_n(x)$$
and let 
$$d=d_X=\oplus \partial^x_y:C_p(X,n)\xrightarrow{} C_{p-1}(X,n-1)$$
be the map induced by the tame symbol (see \cite[(3.2)]{Ro96}). It can be shown that these maps fit into a complex
$$C_{d-*}(X,n-*):= C_d(X,n)\xrightarrow{d} C_{d-1}(X,n-1)\xrightarrow{d} C_{d-2}(X,n-2)\r ...$$
which we call cycle complex or Gersten complex for Milnor $K$-theory. If $n$ is arbitrary we also write $A^p(X)$ for $H^p(C_{d-*}(X,n-*))$ and $A_p(X)$ for $H^{d-p}(C_{d-*}(X,n-*))$.

There are the following four standard (also called basic) maps:
\begin{definition}\label{definition_correspondences}
Let $X$ and $Y$ be schemes of finite type over $S$.
\begin{enumerate}
\item (\cite[(3.4)]{Ro96}) Let $f:X\r Y$ be an $S$-morphism of schemes. Then the pushforward
$$f_*:C_p(X,n)\r C_p(Y,n)$$
is defined as follows:
  \[
    (f_*)^x_y= \left\{
                \begin{array}{ll}
                  N_{k(x)/k(y)} \quad \mathrm{if}\quad k(x)\quad \mathrm{finite} \quad \mathrm{over}\quad k(y),\\
                  0 \quad \quad \quad \quad\quad \mathrm{otherwise}.
                \end{array}
              \right.
  \]
\item (\cite[(3.5)]{Ro96}) Let $g:Y\r X$ be an equidimensional $S$-morphism of relative dimension $s$. Then 
$$g^*:C_p(X,n)\r C_{p+s}(Y,n)$$
is defined as follows
  \[
    (g^*)^x_y= \left\{
                \begin{array}{ll}
                  \mathrm{length}(\O_{Y_x,y})i_* \quad \mathrm{if}\quad \mathrm{codim}_y(Y_x)=0,\\
                  0 \quad \quad \quad \quad\quad \mathrm{otherwise}.
                \end{array}
              \right.
  \]
Here $i:k(x)\r k(y)$ is the inclusion on residue fields and $i_*$ the induced map on Milnor $K$-theory.
\item (\cite[(3.6)]{Ro96}) Let $a\in \O^*_X(X)$. Then 
$$\{a\}:C_p(X,n)\r C_p(X,n+1)$$
is defined by
$$\{a\}^x_y(\rho)=\{a\}\cdot \rho$$
for $x=y$ and zero otherwise.

\item (\cite[(3.7)]{Ro96})  Let $Y$ be a closed subscheme of $X$ and $U=X\setminus Y$. Then the boundary map 
$$\partial^U_Y:C_p(U,n)\r C_{p-1}(Y,n-1)$$
is defined by the tame symbol $\partial^x_y$.
  
\end{enumerate}
\end{definition}

\begin{notat*}
We denote by 
$$\alpha:X\Rightarrow Y$$
a homomorphism $\alpha:C_{d-*}(X,n-*)\r C_{d'-*}(X,m-*)$ which is the composition of basic maps defined in Definition \ref{definition_correspondences}. $\alpha:X\Rightarrow Y$ is called a correspondence. 
\end{notat*} 
The next lemma shows that a correspondence is compatible with the boundary maps $d_X$ and $d_Y$. The lemma is formulated in \cite[Prop. 4.6]{Ro96} for schemes over fields but the proofs transfer almost word by word to our situation. Indeed, either the steps of the proof reduce to equal characteristic or to statements which are known for arbitrary discrete valuation rings. We give some details.

\begin{lemma}(\cite[Prop. 4.6]{Ro96})\label{lemma_correspondences_compatibility}
Let $X$ and $Y$ be schemes of finite type over $S$.
\begin{enumerate}
\item Let $f:X\r Y$ be proper $S$-morphism. Then $$d_Y\circ f_*=f_*\circ d_X.$$
\item Let $g:Y\r X$ be flat $S$-morphism which is equidimensional with constant fibre dimension. Then
$$g^*\circ d_X=d_Y\circ g^*.$$
\item Let $a\in\O_X(X)^{\times}$. Then
$$d_X\circ \{a\}=-\{a\}\circ d_X.$$
\end{enumerate}
\end{lemma}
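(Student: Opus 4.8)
My plan is to verify each of the three identities component by component and, in every case, to reduce it to a local computation at a single codimension-one point where the assertion becomes one of the defining residue axioms of a cycle module. Recall that $d=\bigoplus_{x,y}\partial^x_y$ is a sum of residue maps indexed by pairs $x\in X_{(p)}$, $y\in X_{(p-1)}$ with $y\in\ol{\{x\}}$, and that each component $\partial^x_y$ is built from the local geometry of $\ol{\{x\}}$ at $y$ by passing to the normalization and summing with norms. I would therefore first reduce to the case in which $\ol{\{x\}}$ is normal at $y$, so that $\partial^x_y$ is literally the tame symbol $\partial_v$ of the discrete valuation ring $\O_{\ol{\{x\}},y}$. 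I would also record at the outset that this whole reduction is purely local on the residue fields and local rings of points, so that passing from Rost's setting over a field to the present setting over an excellent discrete valuation ring introduces nothing new.

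For the unit multiplication, part (3), the identity $d\circ\{a\}=-\{a\}\circ d$ is essentially immediate once this reduction is in place. Since $a\in\O_X(X)^{\times}$ is a global unit, for every valuation $v$ occurring in $d$ one has $v(a)=0$, and the residue axiom for a unit reads $\partial_v(\{a\}\cdot\rho)=-\{\bar a\}\cdot\partial_v(\rho)$, where $\bar a$ is the image of $a$ in the residue field. As the image of $a$ in each residue field $k(y)$ is exactly the element by which $\{a\}$ acts on the target, this is precisely the claimed anticommutation, the sign arising from the graded-commutativity of the symbol.

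For the flat pullback, part (2), I would again fix a codimension-one specialization downstairs and analyze the points lying above the two points involved. Flatness together with the constant fiber dimension guarantees that generic points of fibers specialize to generic points of fibers with the expected multiplicities, so that the length coefficients defining $g^{*}$ match on the two sides; the commutation of the residue maps themselves with the residue-field extensions is the base-change axiom for residues. Thus part (2) follows from the multiplicativity of lengths along the flat local homomorphisms combined with that axiom.

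The real work is in part (1), the proper pushforward, which I expect to be the main obstacle. Factoring an arbitrary proper map and using that $f_*$ is defined to vanish whenever the residue-field extension is infinite, I would reduce to $f$ finite and then to a single irreducible $Z\subset X$ with generic point $x$ mapping to $w\in Y$ with $[k(x):k(w)]<\infty$. Writing $E=k(x)$ and $F=k(w)$, the identity reduces, after normalizing, to the norm-residue formula: for a discrete valuation $v$ of $F$ with extensions $v_1,\dots,v_r$ to $E$,
$$\partial_v\circ N_{E/F}=\sum_{i=1}^{r} N_{k(v_i)/k(v)}\circ\partial_{v_i},$$
which is one of the defining residue axioms of a cycle module. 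Within the formalism this is given, so the remaining task is the finite bookkeeping of summing over the finitely many codimension-one points --- finiteness coming from the support and closedness conditions on the cycle module; but one should keep in mind that its substance for Milnor K-theory is the projection formula, that is, Weil reciprocity for the tame symbol, which is the genuinely nontrivial ingredient. I would close by remarking that the very same two-residue analysis on a two-dimensional normal local ring --- the anticommutation of the two intermediate residues --- is what makes $d$ a differential in the first place, so that all of these compatibilities rest on one family of local residue axioms.
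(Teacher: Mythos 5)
You should first be aware that the paper contains no proof of this lemma: it is quoted from \cite[Prop.~4.6]{Ro96}, together with the blanket remark that Rost's arguments over a field transfer to a base which is an excellent discrete valuation ring, so your attempt has to be measured against Rost's own argument. For parts (2) and (3) your outline is essentially that argument: one checks components, and each component is an instance of a residue axiom (the ramification-index/base-change rule plus length bookkeeping for (2), the unit rule for (3)). The only thing you elide is that where $\ol{\{x\}}$ fails to be normal at $y$ the map $\partial^x_y$ is by definition a sum of residues on the normalization composed with norms, so (3) also needs the projection formula to move $\{\bar a\}$ through those norms, not only the unit rule --- a minor omission.

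Part (1), however, has a real gap. You propose to ``factor an arbitrary proper map'', ``reduce to $f$ finite'', and dismiss the components where $[k(x):k(w)]$ is infinite because $f_*$ is defined to be zero there. Neither step is sound: a proper morphism does not factor through finite morphisms, and the dismissed components are exactly where the content of the lemma sits. Let $Z=\ol{\{x\}}$ and let $W=f(Z)$ (closed because $f$ is proper), with generic point $w'$. In the case $\dim W=\dim Z-1$ the composite $d_Y\circ f_*$ vanishes on the component of $x$, but $f_*\circ d_X$ contributes $\sum_y N_{k(y)/k(w')}\circ\partial^x_y$, the sum running over the codimension-one points $y$ of $Z$ dominating $W$; these all have \emph{finite} residue field extensions, so $f_*$ does not kill them. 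The assertion that this sum vanishes is Weil reciprocity for the proper curve $Z\times_W\Spec k(w')$ over $k(w')$. This is precisely where properness enters --- for the open curve $\mathbb{A}^1_F\r\Spec F$ one has $d_Y\circ f_*=0$ while $f_*\circ d_X(\{t,a\})=\{a\}\neq 0$, so the identity fails --- and it does not follow from the norm--residue axiom R3a alone: Rost obtains it by mapping the curve finitely onto $\mathbb{P}^1$ and combining R3a with reciprocity for $\mathbb{P}^1$, i.e.\ the residue at infinity in the Milnor--Bass--Tate sequence. You do invoke Weil reciprocity, but you attach it to R3a itself, conflating Kato's norm-compatibility of residues (what R3a says, and what the finite case uses) with reciprocity for complete curves (what the dimension-drop case needs). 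As written, your case analysis never reaches the latter statement, so your proof of (1) is incomplete.
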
 
\begin{proof}
(1) Let $\delta(f_*)=d_Y\circ f_*-f_*\circ d_X$. We need to show that $\delta(f_*)^x_y=0$ for $x\in X_{(p)}$ and $y\in Y_{(p-1)}$. Let $z=f(x)$. 

If $y\notin \overline{\{z\}}$, then the claim is clear since both boundary maps are zero. 

If $y=z$, then we replace $Y$ by $\Spec(k(y))$ and $X$ by $\overline{\{x\}}\cap X_s$, where $s$ is the image of $y$ in $S$ under the structure morphism $Y\to S$. This is then the case of a proper curve over a field in equal characteristic which is proved in \cite[Prop. 7.4.4]{GS06}. 

If  $y\notin \overline{\{z\}}$ and $y\neq z$, then we may assume that $Y=\overline{\{z\}}$ and $X=\overline{\{x\}}$ and dim$X=\mathrm{dim} Y$. Consider the commutative diagram 
\[
  \xymatrix{ 
  \tilde{X}\ar[r]^g \ar[d]_{\tilde{f}} & X \ar[d]^f \\
  \tilde{Y} \ar[r]^h & Y
  }
\]
in which $g$ and $h$ are the normalisations. Let $\tilde{x}$ be the generic point of $\tilde{X}$ and $\tilde{z}$ be the generic point of $\tilde{Y}$. Then $\delta(\tilde{g}_*)=\delta(\tilde{h}_*)=0$ (which holds by the definition of the boundary map) implies that 
$$\delta(f_*)^x_y\circ g_*=d_Y\circ f_*\circ g_*-f_*\circ d_X\circ g_*=
d_Y\circ h_*\circ \tilde{f}*-f_*\circ g_*\circ d_{\tilde X}=
h_*\circ d_{\tilde{Y}}\circ \tilde{f}*-h_*\circ \tilde{f}*\circ d_{\tilde X}=
h_*\circ\delta(\tilde{f}_*)^{\tilde{x}}_{\tilde{y}}$$
as maps from $K^M_n(\tilde x)$ to $K^M_{n-1}(y)$.
Since $g_*:K^M_n(\tilde x)\r K^M_n(x)$ is an isomorphism, it suffices to show that $\delta(\tilde{f}_*)^{\tilde{x}}_{\tilde{y}}=0$ in order to show that $\delta(f_*)^x_y=0$. But this follows from \cite[Cor. 7.4.3]{GS06} since all points $\tilde{u}\in \tilde{X}$ over $\tilde{y}$ correspond to discrete valuations extending the discrete valuation induced by $\tilde y$.

(2) Let $\delta(g^*)=d_Y\circ g^*-g^*\circ d_X$. We need to show that $\delta(g^*)^x_y=0$ for $x\in X_{(p)}$ and $y\in Y_{(p+s-1)}$. Let $z=g(y)$.

If $z\notin \overline{\{x\}}$, then the claim is clear.

If $z=x$, then let $u\in Y_x$ be a point specialising to $y$. Then the composition $K^M_n(x)\rightarrow K^M_n(u)\xrightarrow{\partial}K^M_{n-1}(y)$ is zero since the valuation on $k(u)$ induced by $y$ is trivial on $k(x)$.

For the case of $z\in \overline{\{x\}}$ and $z\neq x$ we refer to \textit{loc. cit.}.

(3) Follows immediately from the definitions.
\end{proof}

We will also need the following lemma:
\begin{lemma}(\cite[Lemma 4.5]{Ro96})\label{lemma_Rost45}
Let $X$ and $Y$ be schemes of finite type over $S$. Let $g:Y\r X$ be a smooth $S$-morphism of constant fibre dimension $1$ and let $\sigma:X\r Y$ be a section to $g$ and $t\in \O_Y$ be a global parameter defining the subscheme $\sigma(X)$. Moreover, let $\tilde{g}:Y\setminus \sigma(X)\r X$ be the restriction of $g$ and let $\partial$ be the boundary map associated to $\sigma$. Then $$\partial_X\circ \tilde{g}^*=0$$
and   
$$\partial_X\circ  \{t\}\circ\tilde{g}^*=((\mathrm{id})_X)_*.$$
\end{lemma}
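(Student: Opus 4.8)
The plan is to verify both identities by unwinding the definitions of the four standard maps, all of which are given componentwise as sums of homomorphisms between the Milnor $K$-groups of individual points, and then to reduce everything to a single tame symbol computation on each fibre of $g$. Write $Z := \sigma(X)$ and $U := Y \setminus Z$, so that $\tilde g = g|_U : U \to X$ is smooth of constant relative dimension $1$, and fix a generator $\rho = \{a_1,\dots,a_n\} \in K^M_n(x)$ with $x \in X_{(p)}$ and $a_j \in k(x)^{\times}$; by additivity it suffices to track the image of such a $\rho$ under each composite.

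First I would describe the local geometry over a single point $x$. Since $g$ is smooth of relative dimension $1$, the fibre $Y_x = g^{-1}(x)$ is a smooth curve over $k(x)$, and the section meets it in the unique $k(x)$-rational point $\sigma(x)$, whose residue field is identified with $k(x)$ via $g\circ\sigma = \id_X$. Smoothness forces $\sigma(x)$ to lie on a unique irreducible component of $Y_x$; let $\eta$ be its generic point, so $\eta \in U_{(p+1)}$ and $k(\eta)$ is the function field of that component. By the definition of $g^{*}$ the multiplicity $\mathrm{length}(\O_{Y_x,\eta})$ equals $1$ (the smooth fibre is reduced), whence $\tilde g^{*}(\rho) = i_{*}\rho$ at $\eta$, with $i : k(x)\inj k(\eta)$. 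Finally, because $Z = V(t)$ meets $Y_x$ transversally at the smooth point $\sigma(x)$, the function $t$ is a uniformizer for the discrete valuation $v$ on $k(\eta)$ centred at $\sigma(x)$, and the residue field of $v$ is $k(\sigma(x)) = k(x)$.

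Next I would carry out the boundary computation. A dimension count identifies the only point of $Z_{(p)}$ occurring in $\partial(\tilde g^{*}\rho)$: any contributing $z$ lies in $\overline{\{\eta\}}$ with $g(z)\in\overline{\{x\}}$ and $\dim z = p = \dim x$, which together with $g|_Z = \id$ force $g(z)=x$, hence $z=\sigma(x)$; contributions from the other generic points of $Y_x$ vanish since $\sigma(x)$ lies on a single component. Thus both composites reduce to the tame symbol $\partial_v$ on $K^M_{\bullet}(k(\eta))$. For the first identity, $i_{*}\rho$ is a symbol all of whose entries are $v$-units, so $\partial_v(i_{*}\rho)=0$ and $\partial_X\circ\tilde g^{*}=0$. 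For the second, $\{t\}\cdot i_{*}\rho = \{t,i(a_1),\dots,i(a_n)\}$ has a single factor of valuation $1$, and the defining residue formula for the tame symbol gives $\partial_v(\{t\}\cdot i_{*}\rho)=\{\ol{i(a_1)},\dots,\ol{i(a_n)}\}=\rho$ under $k(\sigma(x))=k(x)$; hence $\partial_X\circ\{t\}\circ\tilde g^{*}=(\id_X)_{*}$.

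I expect the main obstacle to be the bookkeeping rather than the tame symbol itself: one must check that among all terms in the sums defining $g^{*}$ and $\partial$, precisely the term indexed by $(\eta,\sigma(x))$ survives, and with multiplicity $1$. This rests on the reducedness of the smooth fibre (so the length in $g^{*}$ is $1$), on a smooth point lying on a unique component (so no other component of $Y_x$ reaches $\sigma(x)$), and on the transversality of $Z$ and $Y_x$ (so that $t$ is genuinely a uniformizer and the residue is read off against a true parameter). Making the dimension-and-closure argument uniform in $x$, so that these pointwise identities assemble into equalities of maps $C_{p}(X,n)\to C_{p}(X,\ast)$, is the delicate part; the mutual compatibilities of the standard maps recorded in \cite{Ro96} are what guarantee this assembly.
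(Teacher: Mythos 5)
Your proof is correct and takes essentially the same route as the source the paper relies on: the paper gives no proof of this lemma at all (it is quoted from \cite[Lemma 4.5]{Ro96}, with the remark that Rost's arguments transfer from a field base to a DVR base), and Rost's own proof is precisely your pointwise reduction to the two tame-symbol rules $\partial_v\circ j_*=0$ and $\partial_v\circ\{\pi\}\circ j_*=\bar{j}_*$ (his rules R3e and R3f), with the dimension-and-closure bookkeeping singling out the single surviving term at $(\eta,\sigma(x))$ with multiplicity $1$. You also correctly read $\tilde{g}$ as the restriction $Y\setminus\sigma(X)\r X$ (the statement's ``$X\setminus\sigma(X)\r Y$'' is a typo), and your observation that the local ring of $\ol{\{\eta\}}$ at $\sigma(x)$ is the DVR of the smooth fibre curve with uniformizer $t$ and residue field $k(x)$ is exactly the point that makes the boundary map contribute no normalization or corestriction terms.
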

\begin{proof}
Both statements reduce to the case that $X$ is the spectrum of a field and then follow from the definitions of the tame symbol.
\end{proof}

\subsection{Geometric presentation}\label{section_geom_rep}\subsubsection{Hyperplanes and linear projections}
We recall some facts about hyperplanes and linear projections in projective space over a discrete valuation ring from \cite[Sec. 0]{Jannsen2012}.

If $L$ is a field, then the dual projective space $(\mathbb{P}_L^N)^\vee$ parametrizes hyperplanes in $\mathbb{P}_L^N$, an $L$-rational point $(a_0:...:a_N)$ corresponding to the hyperplane given by the equation $$a_0x_0+...+a_Nx_N=0,$$ the $x_i$ being the homogeneous coordinates of $\mathbb{P}_L^N$. If $L$ is infinite, then $\mathbb{P}_L^N(L)$ is Zariski dense in $\mathbb{P}_L^N$.

Let $A$ be a discrete valuation ring with function field $K$ and infinite residue field $k$. Then again a hyperplane $H\subset \mathbb{P}_A^N$ is given by an $A$-rational point of the dual projective space $(\mathbb{P}_A^N)^\vee$, or, equivalently, an equation $$a_0x_0+...+a_Nx_N=0, a_i\in A,$$ such that not all $a_i$ lie in the maximal ideal of $A$.

\begin{proposition}\label{proposition_hyperplanes}
Let $V_k$ be a Zariski open dense subset of $(\mathbb{P}_k^N)^\vee$ and $V_K$ be a Zariski-open dense subset of $(\mathbb{P}_K^N)^\vee$. Then there exists a hyperplane $H\subset \mathbb{P}_A^N$ whose restriction to $\mathbb{P}_K^N$, denoted by $H_\eta$, lies in $V_K$ and whose restriction to $\mathbb{P}_k^N$, denoted by $H_s$, lies in $V_k$.
\end{proposition}
\begin{proof}
Let $$sp:(\mathbb{P}_K^N)^\vee(K)\to (\mathbb{P}_k^N)^\vee(k)$$ be the specialisation map which sends $H_\eta$ to $H_s$. Since a hyperplane $H\subset \mathbb{P}_A^N$ is completely determined by $H_\eta$, we need to show that the intersection  $V_K(K)\cap sp^{-1}(V_k(k))$ is non-empty. Let $Z_K=(\mathbb{P}_K^N)^\vee\setminus V_K$ and $Z_k=(\mathbb{P}_k^N)^\vee\setminus V_k$. Let $sp(Z_K)=\overline{Z_K}\cap \mathbb{P}_k^N$, $\overline{Z_K}$ being the Zariski closure of $Z_K$ in $(\mathbb{P}_A^N)^\vee$. The properness of $\mathbb{P}_A^N$ over $\Spec(A)$ implies that every $K$-rational point of the generic fibre specialises to a $k$-rational point of the special fibre and therefore $Z_K(K)\subset sp^{-1}(sp(Z_K)(k))$. This implies that $sp^{-1}((V_k\setminus sp(Z_K))(k)) \subset V_K(K)\cap sp^{-1}(V_k(k))$. But the set $sp^{-1}((V_k\setminus sp(Z_K))(k)) $ contains a $K$-rational point since $sp:(\mathbb{P}_K^N)^\vee(K)\to (\mathbb{P}_k^N)^\vee(k)$ is surjective and $V_k\setminus sp(Z_K)$ contains a $k$-rational point. Therefore $V_K(K)\cap sp^{-1}(V_k(k))$ contains a $K$-rational point.
\end{proof}
\subsubsection{Noether normalization over a DVR}
We will need the following version of Noether normalization over a DVR in the proof of Theorem \ref{thmrightexactintext}. 
\begin{lemma} (\cite[Lem. 1]{GL87})\label{geomrep1} Let $X$ be an affine scheme flat of finite type over a discrete valuation ring $\O_K$. Let $P\subset X$ be a finite set of points such that $X$ is smooth at $P$ over $\O_K$. Let $Y$ be a principal effective divisor which is flat over $\O_K$.
Then there exists an affine open neighbourhood $X_0\subset X$ of $P$ and a morphism $\pi:X_0\r \mathbb{A}^{d-1}_{\O_K}$, where $d$ is the relative dimension of $X$ over $\Spec(\O_K)$, such that
\begin{enumerate}
\item $\pi$ is smooth of relative dimension $1$ at the points of $P$.
\item $\pi$ restricted to $Y_0=X_0\cap Y$ is quasi-finite.
\end{enumerate}
\end{lemma}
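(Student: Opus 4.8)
The statement is a relative version over $\O_K$ of the classical geometric presentation lemma, and the plan is to produce a suitable projection from an ambient affine space that meets both requirements simultaneously.

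First I would reduce to the case where $X$ is smooth over $\O_K$ of relative dimension $d$ everywhere: since the smooth locus is open and contains the finite set $S$, I replace $X$ by an affine open neighbourhood of $S$ contained in it and $Y$ by its intersection with this neighbourhood, which remains a principal effective divisor flat over $\O_K$. I then choose a closed immersion $X\hookrightarrow \mathbb{A}^N_{\O_K}$ with coordinates $x_1,\dots,x_N$, so that $Y$ becomes a closed subscheme of $\mathbb{A}^N_{\O_K}$, flat over $\O_K$ of relative dimension $d-1$. The morphism $\pi$ will be the restriction of a projection $\mathbb{A}^N_{\O_K}\r \mathbb{A}^{d-1}_{\O_K}$.

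Next I would analyse the two desired conditions separately as conditions on a projection $\pi=(\ell_1,\dots,\ell_{d-1})$. For smoothness at $S$: since $\Omega^1_{X/\O_K}\otimes k(s)$ has dimension $d$ at each smooth point $s\in S$, the submersion criterion for a morphism of smooth $\O_K$-schemes shows that $\pi|_X$ is smooth of relative dimension $1$ at $s$ exactly when $d\ell_1,\dots,d\ell_{d-1}$ are linearly independent in $\Omega^1_{X/\O_K}\otimes k(s)$; this is an open, and over an infinite field dense, condition on the coefficients of the $\ell_i$, which can be arranged simultaneously at the finitely many points of $S$ on both the generic and the special fibre. For quasi-finiteness of $\pi|_Y$: this is a relative Noether normalisation of the $(d-1)$-dimensional $Y$ onto $\mathbb{A}^{d-1}_{\O_K}$. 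Over the (infinite) fraction field $K$ a generic linear projection already makes $Y_K\r \mathbb{A}^{d-1}_K$ finite, so the only issue is the special fibre $Y_k$, where the residue field $k$ may be finite.

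The main obstacle, which is the genuinely relative part of the argument, is to obtain a \emph{single} projection $\pi$ that is quasi-finite on $Y$ over both fibres of $\O_K$ while remaining smooth of relative dimension $1$ at $S$. When $k$ is infinite I would simply intersect the open dense conditions over $K$ and over $k$, obtaining a linear $\pi$ with all the required properties. When $k$ is finite the generic argument breaks down on $Y_k$, and here I would invoke Nagata's normalisation trick over $\O_K$, applying a polynomial coordinate change $x_i\mapsto x_i+x_N^{e_i}$ for suitable exponents $e_i$ which renders $Y$ finite, hence quasi-finite, over the $\O_K$-subalgebra generated by the images of $x_1,\dots,x_{d-1}$ on both fibres at once; flatness of $Y$ over $\O_K$ is what guarantees that no positive-dimensional components are introduced. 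The delicate point is that such a nonlinear substitution must not destroy the linear independence of $d\ell_1,\dots,d\ell_{d-1}$ at the points of $S$, and I would handle this by a further generic linear change among the remaining coordinates, verifying that it restores the rank-$(d-1)$ condition at $S$ without reintroducing positive-dimensional fibres of $\pi|_Y$. Finally, restricting to the affine open $X_0\subset X$ on which $\pi|_X$ is smooth of relative dimension $1$ (an open set containing $S$) and setting $Y_0=X_0\cap Y$ gives the required morphism, condition (2) being inherited directly from the quasi-finiteness of $\pi|_Y$.
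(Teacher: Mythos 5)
You should first note that the paper does not contain a proof of this lemma at all: it is quoted, statement and citation, from \cite[Lem.~1]{GL87}, so the only comparison available is with Gillet--Levine's original argument, and your proposal has to stand on its own. For the case of an infinite residue field $k$ your outline is correct and is the standard one: after shrinking $X$ and embedding it in $\mathbb{A}^N_{\O_K}$, condition (1) becomes surjectivity of the linear projection on the $d$-dimensional relative tangent space at each point of $S$, condition (2) follows fibrewise once the centre of projection avoids the closure of $Y$ at infinity (a dimension count, using that $Y_K$ and $Y_k$ both have dimension $d-1$ by flatness), and a simultaneous good choice over $K$ and over $k$ lifts to $\O_K$ because both conditions are open and dense and reduction modulo the maximal ideal is surjective.

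The genuine gap is the finite residue field case, which is not a technical afterthought but the actual content of the lemma. Having correctly observed that generic linear projections are unavailable over finite $k$, and having invoked Nagata's substitution $x_i \mapsto x_i + x_N^{e_i}$ to make $\pi|_Y$ (quasi-)finite on the special fibre, you then repair condition (1) at $S$ by ``a further generic linear change among the remaining coordinates''. This is circular: genericity over $k$ is precisely what you just conceded does not exist when $k$ is finite --- the nonempty open set of good linear changes may contain no $k$-rational point --- and it is moreover unverified, because a linear perturbation of a finite projection can destroy quasi-finiteness. Concretely, for $Y = V(x_1 - x_2x_3) \subset \mathbb{A}^3$ the projection $\pi = (x_2,x_3)$ is an isomorphism of $Y$ onto $\mathbb{A}^2$, yet the linearly perturbed map $\pi' = (x_2 + x_1,\, x_3)$ restricts on $Y$ to $(x_2(1+x_3),\, x_3)$, whose fibre over the point $(0,-1)$ is a line. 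So conditions (1) and (2) genuinely compete, and over a finite residue field one must construct a single, necessarily non-linear, map achieving both simultaneously; this tension is exactly what makes presentation lemmas over finite fields (for instance Gabber's presentation lemma, established in that setting by Hogadi and Kulkarni) hard, and it is the point your proposal leaves unresolved. Nor can the finite case be discarded in this paper: Lemma \ref{mainlemma}, and hence Theorem \ref{thmrightexactintext}, apply the present lemma with no reduction to infinite residue fields; only Theorem \ref{thm1intext} performs a norm argument.
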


Even though we do not need the result in this article, we note that the above lemma can be generalised to higher dimensional bases if $P$ is a finite set of closed points.

\begin{lemma} Let $S$ be the spectrum of a local ring $A$. Let $X$ be an affine scheme flat of finite type of relative dimension $d$ over $S$. Let $P\subset X$ be a finite set of closed points such that $X$ is smooth at $P$ over $S$. Let $Y$ be a principal effective divisor which is flat over $S$.
Then there exists an affine open neighbourhood $X_0\subset X$ of $P$ and a morphism $\pi:X_0\r \mathbb{A}^{d-1}_{S}$ such that
\begin{enumerate}
\item $\pi$ is smooth of relative dimension $1$ at the points of $P$.
\item $\pi$ restricted to $Y_0=X_0\cap Y$ is quasi-finite.
\end{enumerate}
\end{lemma}
\begin{proof}
Let $k$ be the residue field of $A$. 
By \cite[Sec. 7, 5.12]{Qu72} there exists a morphism $\pi:X_k=\Spec(B\otimes_A k)\to \mathbb A^d_k=\Spec(k[x_1,...,x_d])$ which is smooth in a neighbourhood of $P$ and finite when restricted to $\bar Y$. Let $\bar x_1,...,\bar x_d$ be the images of $x_1,...,x_d$ in $B\otimes_A k$ and $\tilde x_1,...,\tilde x_d$ be lifts to $B$. These lifts induce a morphism $\pi:X\to \mathbb A_S^d$. Since $X\to S$ is flat, \cite[II, Cor. 2.2]{SGA1} implies that $\pi$ is smooth in an open neighbourhood of $P$. By \cite[Tag 01TI]{stacks-project} the quasi-finite locus of a morphism of schemes is open. This implies that there is an open subset $V\subset Y$ containing $P\cap Y$ such that $\pi\mid_V$ is quasi-finite. Then we can set $X_0=X-(Y-V)$.
\end{proof}

\subsubsection{Gabber's geometric presentation theorem over a DVR}
We recall Gabber's geometric presentation theorem over a discrete valuation ring from \cite[Thm. 2.4]{SS18}:
\begin{proposition}\label{proposition_Strunk_Schmidt}
	Let $S$ be the spectrum of a henselian discrete valuation ring $\mathcal O_K$ with infinite residue field $k$. Let $s$ be the closed point of $S$. Let $X$ be a smooth affine $S$-scheme of finite type, fibrewise of pure dimension $n$ and let $Z\hookrightarrow X$ be a proper closed subscheme. Let $z$ be a point in $Z$. If $z$ lies in the special fibre, suppose that $Z_s\neq X_s$. Then, Nisnevich-locally around $z$, there exists a closed embedding $X\hookrightarrow \mathbb A^N_S$ and a linear projection $$p_{\underline{u}}=p_{(u_1,...,u_{n-1})}\times _S p_{u_n}:X\to  \mathbb A^{n-1}_S\times\mathbb A^1_S$$
	and Zariski-open neighbourhoods $p_{(u_1,...,u_{n-1})}(z)\in V \subset  \mathbb A^{n-1}_S$ and $z\in U\subset p_{(u_1,...,u_{n-1})}^{-1}(V)$ such that
	\item[(1)] $Z\cap U=Z\cap p_{(u_1,...,u_{n-1})}^{-1}(V)$,
	\item[(2)] $p_{(u_1,...,u_{n-1})}|_Z:Z\to \mathbb A^{n-1}_S$ is finite,
	\item[(3)] $p_{\underline u}|_U:U\to \mathbb A^{n}_S$ is \'etale and restricts to a closed immersion $Z\cap U\hookrightarrow \mathbb A^1_V$, and
	\item[(4)] $p_{\underline u}^{-1}(p_{\underline u}(Z\cap U))\cap U=Z\cap U$.
\end{proposition}

In Section \ref{section_5} we need the following Zariski-local version. The difference is that we drop the finiteness of the map $p_{(u_1,...,u_{n-1})}|_Z:Z\to \mathbb A^{n-1}_S$ and replace it by quasi-finiteness (as in Lemma \ref{geomrep1}) which allows us to work Zariski-locally. The properties of the proposition that we need for the proofs in Section \ref{section_5} are (3) and (4).

\begin{proposition}\label{geomrep2}
	Let $S$ be the spectrum of a discrete valuation ring $\mathcal O_K$ with infinite residue field $k$. Let $s$ be the closed point of $S$. Let $X$ be a smooth affine irreducible $S$-scheme of finite type, fibrewise of pure dimension $n$ and let $Z\hookrightarrow X$ be a proper closed subscheme. Let $z_1,...,z_t$ be closed points in $Z$. Suppose that the special fiber $Z_s$ of $Z$ does not contain any irreducible component of $X_s$. Then, Zariski-locally around $z$, there exists an immersion $X\hookrightarrow \mathbb A^N_S$ and a linear projection $$p_{\underline{u}}=p_{(u_1,...,u_{n-1})}\times _S p_{u_n}:X\to  \mathbb A^{n-1}_S\times\mathbb A^1_S$$
	and Zariski-open neighbourhoods $p_{(u_1,...,u_{n-1})}(\{z_1,...,z_t\})\in V \subset  \mathbb A^{n-1}_S$ and $z_1,...,z_t\in U\subset p_{(u_1,...,u_{n-1})}^{-1}(V)$ such that
	\item[(1)] $Z\cap U=Z\cap p_{(u_1,...,u_{n-1})}^{-1}(V)$,
	\item[(2)] $p_{(u_1,...,u_{n-1})}|_Z:Z\to \mathbb A^{n-1}_S$ is quasi-finite,
	\item[(3)] $p_{\underline u}|_U:U\to \mathbb A^{n}_S$ is \'etale and restricts to a closed immersion $Z\cap U\hookrightarrow \mathbb A^1_V$, and
	\item[(4)] $p_{\underline u}^{-1}(p_{\underline u}(Z\cap U))\cap U=Z\cap U$.
\end{proposition}
\begin{proof}
Choose an embedding $X\hookrightarrow \mathbb A^N_S$ and take the closure $\bar X$ of $X$ inside of $\mathbb P^N_S$. Let $\bar Z$ be the closure of $Z$ in $\bar X$. Let $H$ be a hyperplane in $\mathbb P^N_S$ which does not contain $\bar X$ and which has the property that $z_1,...,z_t\notin H$ and that no irreducible component of $\bar Z_s$ is contained in $H_s$. The existence of such a hyperplane follows from Bertini theorems and Proposition \ref{proposition_hyperplanes}. Set $X':= \bar X \setminus H$ and $Z':= X'\cap \bar Z$. Then $X'$ is a closed subscheme of  $\mathbb A^{N}_S=\mathbb P^{N}_S\setminus H$ and $Z'$ is fibrewise dense in the closure of $Z'$ in $\mathbb P^{N}_S$. This is the key property which is needed for the proof of Proposition \ref{proposition_Strunk_Schmidt} and which is achieved in \cite{SS18} by working Ninevich-locally (see the outline of the proof after Lemma 2.3 and Proposition 2.6 of \textit{loc. cit.}). We can now follow the proof in \cite{SS18} from paragraph 2.8 onwards noticing that even though $X'$ is not necessarily smooth over $\Spec(\mathcal O_K)$, it suffices that it is still smooth at $z_1,...,z_t$. Note that we can work with more than one point $z_i\in Z$ since the sets of linear projections satisfying the proposition for one $z_i$ are Zariski-open subsets of the space of projections. Furthermore we do not need $\mathcal O_K$ to be henselian since the map $W(\sigma)\to W(\mathbb F)$ in the proof of Proposition 2.9 of \textit{loc. cit.} is always surjective as $W$ is a subset of affine space (this follows from the surjectivity of the specialisation map in the proof of Proposition \ref{proposition_hyperplanes}).

Finally we intersect $X'$ back with $X$ which gives us the proposition and is the reason for the quasi-finiteness, and not finiteness, in (2).
\end{proof}

\begin{remark}
For the proofs of Theorems \ref{thm1intext} and \ref{thm2intext} we do not need the quasi-finiteness property (2) in Proposition \ref{geomrep2} since it is not needed for the application of Theorem \ref{Nisdescent}, that is Nisnevich descent. 
For the proof of Theorem \ref{thm2intext} it would therefore suffice to use \cite[Thm. 3.14]{Dutta1995} of Dutta if we additionally assume that $k$ is perfect. For the proof of Theorem \ref{thm1intext} one needs to work with semi-local rings in the proof. We do not know if the proof of \cite[Thm. 3.14]{Dutta1995} can be modified to cover that situation, therefore we use Proposition \ref{geomrep2}. 
\end{remark}

\section{Exactness on the right}
In this section we prove the following theorem. The proof is a combination of the strategies of the proofs of \cite[Thm. 6.1]{Ro96} and \cite[Sec. 2]{GL87}. In \cite{Ro96} the following theorem is proved for smooth schemes over a field and is attributed to Gabber. The essential ingredient is ``Quillen's trick'' which in \cite{GL87} is adapted to the relative case for algebraic $K$-theory. This ``trick'', originally used by Quillen in \cite{Qu72} to prove the Gersten conjecture for algebraic $K$-theory for smooth schemes over a field, makes a reduction to the case of divisors and the main tool in this reduction is a version of Noether normalisation. Another important ingredient in the proof, which is needed when using  ``Quillen's trick'' in the relative situation, is Zariski's main theorem.
\begin{theorem}\label{thmrightexactintext}
Let $X$ be the spectrum of a smooth local ring over an excellent discrete valuation ring $\O_K$. Let $\dim X=d+1$ or in other words let $X$ be of relative dimension $d$ over $\Spec(\O_K)$. Then 
$$A^p(X)=0$$
for $p>0$. In other words the complex
$$\bigoplus_{x\in X^{(0)}}K^M_n(x)\r \bigoplus_{x\in X^{(1)}}K^M_{n-1}(x)\r ...\r \bigoplus_{x\in X^{(d+1)}}K^M_{n-d-1}(x)\r 0$$
is exact (see Section \ref{subsection_chow_groups_with_coeff}).
\end{theorem}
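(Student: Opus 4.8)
The plan is to run the Gabber--Rost homotopy argument for the cycle complex, importing Quillen's trick in the relative form of Gillet and Levine. Since the cohomology of Rost's cycle complex commutes with filtered colimits of rings, and $X=\Spec(R)$ with $R$ a smooth local $\O_K$-algebra is a filtered colimit of localizations of smooth affine $\O_K$-schemes of finite type, it suffices to prove the following: given such a smooth affine $X$ of relative dimension $d$, a finite set $S$ of points, and a cycle $\rho$ of codimension $p\ge 1$ with $d\rho=0$, the class of $\rho$ dies after restricting to a Zariski neighbourhood of $S$. I may also assume that the residue field of $\O_K$ is infinite, reducing the finite case to the infinite one by the Nisnevich-descent property of Theorem~\ref{Nisdescent}, exactly as in Kerz's proof over a field; this is what lets the geometric presentation lemmas apply. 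Set $Z=\mathrm{supp}(\rho)$, a closed subset of positive codimension.

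Next I would put $Z$ into horizontal position relative to the base. Choosing a principal effective divisor $W\supset Z$ that is flat over $\O_K$ (possible after shrinking, since $X$ is smooth and $Z$ has positive codimension) and applying the relative Noether normalization of Lemma~\ref{geomrep1} to the pair $(X,W)$, I obtain, on a neighbourhood $X_0$ of $S$, a morphism $\pi\colon X_0\to B$ with $B$ a localization of $\mathbb{A}^{d-1}_{\O_K}$, smooth of relative dimension one at $S$, whose restriction to $W_0$, hence to $Z$, is quasi-finite. Shrinking $B$ around the semi-local image of $S$ and using properness on the relevant components makes $Z\to B$ finite, so $\rho$ is now supported on a subscheme finite over $B$, with no component contained in a fibre of $\pi$. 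After a further shrinking I may assume a section $\sigma\colon B\to X_0$ of $\pi$ whose image is disjoint from $Z$, together with a global coordinate $t\in\O_{X_0}$ cutting out $\sigma(B)$ --- precisely the data of the homotopy Lemma \cite[Lemma 4.5]{Ro96}.

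With this presentation the proof becomes a contracting-homotopy computation. The operator identities $\partial_B\circ\tilde{\pi}^{\ast}=0$ and $\partial_B\circ\{t\}\circ\tilde{\pi}^{\ast}=\mathrm{id}_{\ast}$ of \cite[Lemma 4.5]{Ro96}, combined with the compatibility of the four standard maps with the differential $d$ (\cite[Prop. 4.6]{Ro96}), show that pullback along $\pi$ is split and yield a contracting homotopy $h$, built from $\{t\}$ and $\pi^{\ast}$, on the part of the cycle complex supported horizontally over $B$; the split Milnor--Bass--Tate sequence of Theorem~\ref{milnorbasstate} is the algebraic shadow of this splitting, computing $C_{\ast}(\mathbb{A}^1_B)$ from $C_{\ast}(B)$ and the fibres. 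Because $Z$ is finite over $B$ and disjoint from $\sigma(B)$, the boundary contributions along $\sigma$ drop out, and the homotopy expresses $\rho$, up to the pullback $\pi^{\ast}$ of a closed cycle of positive codimension on $B$, as $d(h\rho)$. Since $\dim B<\dim X$, an induction on the relative dimension over $\O_K$ removes the remaining class; the base case is $\O_K$ itself, where the Gersten complex is the two-term complex $K^M_m(\mathrm{Frac}(\O_K))\to K^M_{m-1}(k)$ and $A^p$ vanishes for $p>0$ by surjectivity of the tame symbol. Pulling the bounding chain on $B$ back along $\pi$ and adding $h\rho$ then exhibits $\rho$ as a boundary near $S$, giving $A^p(X)=0$.

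The main obstacle is the relative form of Quillen's trick, i.e. the passage from Gabber's field-theoretic presentation to the situation over a discrete valuation ring. Over a field one may freely move $Z$ into finite position over $\mathbb{A}^{d-1}$; over $\O_K$ one must prevent components of $Z$ from becoming vertical in, or degenerating within, the special fibre, for otherwise $\pi|_Z$ fails to be finite and the entire horizontal picture on which the homotopy rests collapses. This is exactly where the excellence of $\O_K$, the flatness of the chosen divisor over $\O_K$, and the geometric representation Lemmas~\ref{geomrep1} and~\ref{geomrep2} (the relative lifts of the presentation lemma of \cite{CHK97}, obtained via smoothness on special fibres and Nakayama) enter, and it is the reason the result must be phrased in terms of elements that can be moved into horizontal position relative to the base. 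Controlling the special fibre uniformly, and checking that the homotopy formula closes up without spurious boundary terms coming from it, is the step I expect to require the most care.
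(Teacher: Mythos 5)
Your second half---the filtered-colimit reduction, Gillet--Levine's relative Noether normalization, and the Rost-style correspondence homotopy killing classes supported on a divisor flat over $\O_K$---is essentially the paper's Lemma \ref{mainlemma}. But the step by which you feed cycles into that machine is false, and it is precisely the step that distinguishes the relative case from Gabber's theorem over a field. You claim that for a closed cycle $\rho$ of codimension $p\geq 1$ one can, after shrinking around $S$, find a principal effective divisor $W\supset Z=\mathrm{supp}(\rho)$ that is \emph{flat} over $\O_K$ ``since $X$ is smooth and $Z$ has positive codimension.'' This fails whenever $Z$ has a vertical component of codimension one. For $p=1$ a closed cycle may have a nonzero entry at a generic point $\eta$ of the special fibre; then $\overline{\{\eta\}}$ is an irreducible component of the special fibre, any divisor $W\supset\overline{\{\eta\}}$ has $\overline{\{\eta\}}$ as an irreducible component and hence is \emph{not} flat over $\O_K$, and no shrinking helps because the closed points one localizes at lie on the special fibre, so these vertical components survive every shrinking. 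You flag exactly this danger in your final paragraph (``one must prevent components of $Z$ from becoming vertical''), but flatness of $W$ is the thing you assume rather than prove: for vertical supports it cannot be arranged by shrinking at all; the cycle itself has to be changed within its homology class.

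That change of representative is the missing first half of the paper's proof. There one shows that $A^{d-p}(U)$ is generated by classes of cycles supported on subschemes flat over $\O_K$: for a point $y$ of the support lying in the special fibre, \cite[Lem.~7.1]{SS10} produces an integral subscheme $Z\subset X$ of dimension $p+1$ through $y$, regular at $y$ and meeting the generic fibre; by the Chinese remainder theorem in the semi-local ring of $Z$ along its special fibre, a symbol $\{\bar{\alpha}_1,\dots,\bar{\alpha}_n\}$ at $y$ lifts to units $\alpha_i$ in $k(z)$ ($z$ the generic point of $Z$) specializing to $1$ on the other vertical components, and then $\partial\{\alpha_1,\dots,\alpha_n,\tilde{\pi}\}$, with $\tilde{\pi}$ a local parameter at $y$, has component $\{\bar{\alpha}_1,\dots,\bar{\alpha}_n\}$ at $y$ and $0$ at every other vertical point. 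Thus modulo boundaries every class is horizontally supported, and only then does your engulfing-plus-homotopy argument (Lemma \ref{mainlemma}) apply. Two further corrections to your sketch of the homotopy: the section must be $\sigma\colon Y\to Y\times_{\mathbb{A}^{d-1}_{\O_K}}X$ induced by the inclusion $Y\hookrightarrow X$ (and one needs Zariski's main theorem plus a function $t'$ equal to $1$ on $\bar{Z}-Z$ to kill the boundary terms at infinity), not a section of $\pi$ with image disjoint from $Z$; and the correct identity $d_X\circ H+H\circ d_Y=i_*$ leaves no residual pullback term, so your induction on the relative dimension, as well as the reduction to infinite residue fields, is not needed for this theorem.
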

We need the following lemma:
\begin{lemma}\label{mainlemma}
Let $X$ be a smooth scheme of $X$ of relative dimension $d$ over an excellent discrete valuation ring $\O_K$. Let $Y$ be a closed subscheme of codimension $c>0$ which is flat over $\O_K$. Let $x\in Y$ be a point. Then there is neighbourhood $U\subset X$ of $x$ such that $$A_p(Y\cap U) \xrightarrow{i_*} A_p(U)$$
is zero for all $p$, where $i:Y\cap U\r U$ is the inclusion.
\end{lemma}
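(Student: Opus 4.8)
The plan is to carry out Gabber's moving argument from \cite[Thm.~6.1]{Ro96} in the relative form of \cite[Sec.~2]{GL87}: using Quillen's trick I will present $X$ locally as a relative curve over an affine base in which, after a base change, the subscheme $Y$ becomes a \emph{section}, and for a section the desired vanishing follows formally from the homotopy formula \cite[Lemma~4.5]{Ro96} together with the localization sequence of the cycle complex. Throughout I use that Rost's cycle complexes possess localization long exact sequences, even though the Milnor $K$-sheaf itself admits no localization sequence.

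\emph{Geometric presentation.} Since $X$ is regular and $Y$ is flat over $\O_K$ of positive codimension, I may choose near $x$ a principal effective divisor $D\supseteq Y$ that is again flat over $\O_K$ (a horizontal hypersurface through $Y$); this is the step that uses flatness and would fail for a vertical $Y$. Applying Lemma~\ref{geomrep1} to $D$ gives an affine open $U\ni x$ and a morphism $\pi\colon U\r S:=\mathbb{A}^{d-1}_{\O_K}$ that is smooth of relative dimension $1$ at $x$ and restricts to a quasi-finite map on $D$, hence on $Y_U:=Y\cap U$. Shrinking $U$ (and $S$) I may assume in addition that $\pi$ is smooth of relative dimension $1$ on all of $U$ and that $\pi|_{Y_U}\colon Y_U\r S$ is finite.

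\emph{Reduction to a section and conclusion.} Form the base change $q\colon U_Y:=U\times_S Y_U\r Y_U$, which is again smooth of relative dimension $1$, together with the graph section $\sigma\colon Y_U\r U_Y$, $y\mapsto (i(y),y)$, and the other projection $p\colon U_Y\r U$. Then $p$ is finite (being the base change of $Y_U\r S$), $p\circ\sigma=i$ and $q\circ\sigma=\id$, so $i_*=p_*\circ\sigma_*$ as maps of cycle complexes. After a further shrinking I arrange that $\sigma(Y_U)$ is cut out by a global parameter $t\in\O_{U_Y}$, so that \cite[Lemma~4.5]{Ro96} applies to $(q,\sigma,t)$ and yields $\partial\circ\{t\}\circ\tilde q^{\,*}=(\id_{Y_U})_*$. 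By \cite[Prop.~4.6]{Ro96} the operators $\tilde q^{\,*}$, $\{t\}$ and the boundary $\partial$ are (anti)compatible with the differential, so this identity exhibits the connecting map $\partial$ of the localization sequence of the pair $(U_Y,\sigma(Y_U))$ as a split surjection onto $A^*(Y_U)\cong A^*(\sigma(Y_U))$. Exactness of that sequence forces $\sigma_*=0$ on $A^*$, and therefore $i_*=p_*\circ\sigma_*=0$, which is the assertion.

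The main obstacle is the reduction in the last step: arranging, over the discrete valuation ring and Zariski-locally around $x$, that the quasi-finite projection $\pi|_{Y_U}$ becomes genuinely finite (so that $p$ is proper and $p_*$ is defined on all of $C_*(U_Y)$, not merely on cycles supported on the graph) and that the graph section is principal. This is precisely where horizontality is essential: over a field it is automatic, whereas here the flatness of $Y$ over $\O_K$ is what allows the divisor $D$, and hence the finite relative-curve presentation, to exist, and all of Rost's functorial formulas must then be applied relative to the base $\Spec(\O_K)$ rather than a field.
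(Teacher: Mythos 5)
Your overall strategy coincides with the paper's: apply the relative Noether normalization of Lemma \ref{geomrep1} (via a horizontal principal divisor through $Y$, which is indeed where flatness enters), base change along $\pi|_{Y_U}$ so that $Y_U$ becomes a section $\sigma$ of a smooth relative curve, and then use Rost's formulas \cite[Lemma 4.5, Prop. 4.6]{Ro96} to kill $\sigma_*$. Your packaging of the endgame (localization sequence of the pair $(U_Y,\sigma(Y_U))$ plus the splitting $\partial\circ\{t\}\circ\tilde{q}^*=\mathrm{id}_*$) is a legitimate variant of the paper's explicit chain homotopy and would be fine \emph{if} $p$ were proper.

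The genuine gap is the sentence ``Shrinking $U$ (and $S$) I may assume in addition that \dots $\pi|_{Y_U}\colon Y_U\r S$ is finite.'' Quasi-finiteness cannot be upgraded to finiteness by Zariski shrinking: finiteness is properness over the target, shrinking $U$ deletes points from the fibers of $Y_U\r S$ and so creates new ``missing'' boundary points, and shrinking $S$ around $\pi(x)$ helps only if no point of the Zariski-main-theorem boundary $\bar{Y}\setminus Y_U$ (with $\bar{Y}$ finite over $S$) lies over $\pi(x)$ --- nothing in Lemma \ref{geomrep1} guarantees this. (Over an infinite field one can choose the projection so that the closure of $Y$ at infinity avoids the center of projection, which is why finiteness is available in the Quillen--Gabber field case; over $\O_K$ the paper arranges finiteness only in the special situation of Theorem \ref{thm1intext}, for hypersurfaces in $\mathbb{A}^d_{\O_K}$ with $k$ infinite, citing \cite{SS18}.) The failure is fatal for your argument as written: the identity $i_*=p_*\circ\sigma_*$ holds at the level of graded groups, and your localization argument does prove $\sigma_*=0$ on $A_*$, but for non-proper $p$ the pushforward $p_*$ is not a chain map ($p_*\circ d\neq d\circ p_*$), so $\sigma_*=0$ on homology cannot be transported along $p_*$ to give $i_*=0$ on homology. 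The paper's proof is built precisely around this point: it factors the quasi-finite $p\colon Z\r X$ through an open immersion $Z\inj\bar{Z}$ followed by a \emph{finite} $\bar{p}\colon\bar{Z}\r X$ (Zariski's main theorem), and then --- this is the idea your proposal is missing --- uses the Chinese remainder theorem, as in the proof of \cite[Thm. 4.1]{Ge04}, to extend the parameter $t$ to a function $t'$ on $\bar{Z}$ with $t'\equiv 1$ on $D=\bar{Z}\setminus Z$, so that $\mathrm{div}(t')=\bar{\sigma}(Y)$ and the boundary term $i_{D*}\circ\partial^{\bar{Q}}_{D}\circ\{t'\}\circ q'^*$ in the chain-homotopy computation vanishes. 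With that, $H=\bar{p}_*\circ j'_*\circ j_*\circ\{t'\}\circ q'^*$ satisfies $d_X\circ H+H\circ d_Y=i_*$ without any finiteness of $p$ itself. You correctly flagged finiteness as ``the main obstacle,'' but asserting it away is not a proof, and flatness of $Y$ over $\O_K$ alone does not deliver it --- it delivers only the quasi-finite presentation.
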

Note that $Y$ being flat over $\O_K$ is equivalent to each component of $Y$ mapping surjectively to $\Spec(\O_K)$ (see \cite[Prop. 4.3.9]{Li02}). The statement implies in particular that the composition $A_p(Y)\to A_p(X)\to A_p(U)$ is the zero map.
\begin{proof}
By Lemma \ref{geomrep1}, replacing $X$ by a smaller affine neighbourhood we may find a morphism 
$$\pi:X\r \mathbb{A}_{\O_K}^{d-1}$$
of relative dimension $1$ which is smooth at $x$ and such that $\pi|_Y$ is quasi-finite. Note that $Y$ is of arbitrary codimension $c>0$ and that Lemma \ref{geomrep1} assumes that $Y$ is a principal effective divisor. However, since $X$ is regular, if codim$_X(Y)=1$, then $Y$ is locally principal and if codim$_X(Y)>1$, then we can take any element $f$ in the maximal ideal of $\O_{X,Y}$ (the stalk of $\O_X$ at the generic point of $Y$) with trivial valuation at the generic point of the special fibre of $X$ and for this element there is an open neighbourhood $U$ of $x$ in which $V(f)$ contains $U\cap Y$ and satisfies the conditions of Lemma \ref{geomrep1}. We then apply Lemma \ref{geomrep1} to $U$ and $V(f)$.

The inclusion $i:Y\r X$ induces a section $\sigma: Y\r Y\times_{\mathbb{A}_{\O_K}^{d-1}}X=: Z$ to the projection $q: Z:=Y\times_{\mathbb{A}_{\O_K}^{d-1}}X\r Y$, i.e.  there is commutative diagram

\[
  \xymatrix{ 
  Y \ar@{^{(}->}[rrd]^i \ar[ddr] \ar[dr]^\sigma & & \\
   & Z\ar[r]^p \ar[d]^q  & X \ar[d]^\pi
  \\
 & Y   \ar[r]^{\pi|_Y} & \mathbb{A}_{\O_K}^{d-1}.
  }
\]

By Zariski's main theorem (see \cite[18.12.13]{EGA4IV}) the quasi-finite morphism $p$ factors as an open immersion $Z\r \bar{Z}$ followed by a finite morphism $\bar{p}:\bar{Z}\r X$, i.e.  there is commutative diagram

\[
  \xymatrix{ 
  Y \ar@{^{(}->}[rrd]^i \ar[ddr] \ar[dr]^\sigma \ar[rr]^{\bar{\sigma}} & & \bar{Z} \ar[d]^{\bar{p}} &  D:=\bar{Z}\setminus Z\ar@{_{(}->}[l]_-{i'} \\
   & Z \ar[r]^p \ar[d]^q \ar[ur] & X \ar[d]^\pi &
  \\
 & Y   \ar[r]^{\pi|_Y} & \mathbb{A}_{\O_K}^{d-1}. &
  }
\]
Note that $\sigma(Y)$ is smooth over $Y$ and that $q$ is smooth at $p^{-1}(x)$. Therefore by \cite[Exp. II, Thm. 4.15]{SGA1}, $\sigma(Y)$ is regularly immersed at $p^{-1}(x)$.
Passing to a smaller open subscheme of $X$ we may assume that $\sigma(Y)\subset Z$ is given by a section $t\in \O_Z(Z)$. Let $D:=\bar{Z}\setminus Z$ and $\bar{\sigma}:Y\xrightarrow{\sigma}Z\r \bar{Z}$. Since $i=\bar{p}\circ \bar{\sigma}$ is a closed immersion, $\bar{\sigma}$ is a closed immersion (see \cite[Lem. 3.3.15]{Li02}). Furthermore $D\cap \bar{\sigma}(Y)=\emptyset$. By the Chinese remainder theorem, and modifying $X$ appropriately (see the proof of \cite[Thm. 4.1]{Ge04}), there is a section $t'\in\O_{\bar{Z}}(\bar{Z})$ which is $1$ on $D$ and whose zero set is $\bar{\sigma}(Y)$. In particular, $\mathrm{div}(t')=\bar{\sigma}(Y)$. Indeed, if $Z=\Spec(T)$ and $\bar{Z}=\Spec(\bar{T})$ and if $I\subset {T}$ defines ${\sigma}(Y)$ and $J\subset \bar{T}$ defines $D$, then by the Chinese remainder theorem there is a $t'\in \bar{T}$ mapping to $(1,t)\in \bar{T}/J\oplus \bar{T}/(I\bar{T})^2$. Note that since $D\cap \bar{\sigma}(Y)=\emptyset$ we have that $\bar{T}/(I\bar{T})^2\cong T/I^2$. Then we shrink $X$ by $\bar{p}(\mathrm{div}(t')\setminus \bar{\sigma}(Y))$. 

Let $Q:=Z\setminus \sigma(Y)$, $\bar{Q}:=\bar{Z}\setminus \bar{\sigma}(Y)$, $j:Q\r \bar{Q}$, $j':\bar{Q}\r \bar{Z}$ and $q':=q|_Q$. Consider the correspondence (Definition \ref{definition_correspondences})
$$H: Y\stackrel{q'^*}{\Longrightarrow} Q \stackrel{\{t'\}}{\Longrightarrow} Q \stackrel{j_*}{\Longrightarrow} \bar{Q}  \stackrel{j'_*}{\Longrightarrow} \bar{Z} \stackrel{\bar{p}_*}{\Longrightarrow} X$$
giving a homomorphism
$$H:C_p(Y,n)\r C_{p+1}(X,n+1).$$ Note that $q'^*$ shifts the degree by $(1,0)$ since $q'$ is of relative dimension $1$ and that $\{t'\}$ shifts the degree by $(0,1)$.
This gives a chain homotopy in the sense that
$$d_X\circ H+H\circ d_Y=i_*.$$
In other words, the map of chain complexes $i:C_{d+1-c-*}(Y,n-c-*)[-c]\r C_{d+1-*}(X,n-*)$ is null-homotopic and therefore the induced map on homology groups is zero. Indeed, 
\begin{align*}
 d_X\circ H &=d_X\circ \bar{p}_* \circ j'_*\circ j_*\circ \{t'\}\circ q'^* & (\mathrm{Def.\; of\;} H) \\
 &= \bar{p}_*\circ d_{\bar{Z}} \circ j'_*\circ j_*\circ \{t'\}\circ q'^* & (\mathrm{Lem.\;}\ref{lemma_correspondences_compatibility}(1)) \\
 &= \bar{p}_*\circ (j'_*\circ j_*\circ d_{Q}+i'_{D_*}\circ \partial^{{Q}}_{D}+\bar\sigma_*\circ \partial^{{Q}}_{\sigma(Y)}) \circ \{t'\}\circ q'^* & (\mathrm{Def.\; of\;} d \;\&\; \bar{Z}=Q\cup D\cup \sigma(Y))  \\
 &= \bar{p}_*\circ j'_*\circ j_*\circ d_{Q} \circ \{t'\}\circ q'^*+\bar{p}_*\circ \bar\sigma_*\circ \partial^{Q}_{\sigma(Y)} \circ \{t'\}\circ q'^* & (t'\mid_D=1\Rightarrow i'_{D_*}\circ \partial^{{Q}}_{D}\circ \{t'\}=0)\\
 &=- \bar{p}_*\circ j'_*\circ j_*\circ \{t'\}\circ d_{Q} \circ q'^*+\bar{p}_*\circ \bar\sigma_*\circ \mathrm{id}_* & (\mathrm{Lem.\;}\ref{lemma_correspondences_compatibility}(3) \;\&\;  \mathrm{Lem.\;}\ref{lemma_Rost45}) \\
 &= -H\circ d_Y +i_*. & (\mathrm{Def.\; of\;} H, \mathrm{Lem.\;}\ref{lemma_correspondences_compatibility}(2)\;\&\; p\circ\sigma=i) 
\end{align*} 
\end{proof}

\begin{proof}[Proof of Theorem \ref{thmrightexactintext}]
Let $(U,x)$ be a pair of a smooth scheme $U$ over $\O_K$ and a point $x\in U$ with $\Spec(\mathcal{O}_{U,x})=X$. Then
$$C_p(X,n)=\varinjlim_{(U,x)}C_p(U,n),$$
where the colimit is taken over all pairs $(U,x)$ as above.
Furthermore
$$C_p(U,n)=\varinjlim_{Y}C_p(Y,n)$$
where $Y$ runs through the closed subschemes of $U$ of dimension $p$. Therefore 
\begin{equation}\label{equation_proof_exactness_right}
A^{d+1-p}(X)=\varinjlim_{(U,x)}A_p(U)=\varinjlim_{(U,x)}\varinjlim_{Y}A_p(Y). 
\end{equation}
Note that $A^{d+1-p}(U)$ is generated by $A_p(Y)$ with $Y$ flat over $\O_K$.
In fact, if $y\in Y^{(0)}$ is contained in the special fibre of $X$ over $\O_K$, then by \cite[Lem. 7.2]{SS10} one can find an integral closed subscheme $Z\subset X$ of dimension $p+1$ satisfying the following conditions:
\begin{enumerate}
\item Let $X_K$ be the generic fibre of $X$ over $\O_K$, then $Z\cap X_K\neq \emptyset$.
\item $y\in Z$. 
\item $Z$ is regular at $y$.
\end{enumerate}
Let $\{\bar{\alpha}_1,...,\bar{\alpha}_n\}\in K^M_n(y)$. Let $Z_0=\cup_{i\in I} Z_0^{i}\cup \overline{\{y\}}$ be the union of the pairwise different irreducible components of the special fibre of $Z$ with those irreducible components different from $\overline{\{y\}}$ indexed by $I$. 
Since all maximal ideals, $\mathfrak m_i$ corresponding to $Z_0^{i}$ and $\mathfrak m_y$ corresponding to $\overline{\{y\}}$, in the semi-local ring $\O_{Z,Z_0}$\footnote{$\O_{Z,Z_0}:=\colim_{U\supset S} \O_Z(U)$, where the colimit is taken over all open subsets $U$ of $Z$ containing the set $S$ of generic points of $Z_0$.} are coprime, the map $\O_{Z,Z_0}\r \prod_{i\in I}\O_{Z,Z_0}/\mathfrak m_i\times \O_{Z,Z_0}/\mathfrak m_y$ is surjective. 
Therefore we can lift each $\bar{\alpha}_i$ to some $\alpha_i\in K^M_1(\O_{Z,Z_0})\subset K^M_1(z)$, $z$ being the generic point of $Z$, which specializes to $\bar{\alpha}_i$ in $K(\overline{\{y\}})^{\times}$ and to $1$ in $K(Z_0^{i})^{\times}$ for all $i\in I$. Let $\tilde{\pi}$ be a local parameter at $y$. Then 
$$\partial: K^M_{n+1}(z)\r \oplus_{x\in Z_0^{(0)}}K^M_{n}(x)$$
maps $\{\alpha_1,...,\alpha_n,\tilde{\pi}\}$ to $(0,...,0,\{\bar{\alpha}_1,...,\bar{\alpha}_n\})$.

By Lemma \ref{mainlemma} we have that $A_p(Y)\r A_p(U)$ is the zero map for $Y$ flat over $\O_K$ and $U$ small enough. It follows from (\ref{equation_proof_exactness_right}) and the last paragraph that $A^{p}(X)=0$ for $p>0$.
\end{proof}

We summarise the proof as follows: we showed that elements in the Gersten complex being supported on relative or horizontal, i.e. flat, subschemes of codimension $>0$ vanish after taking homology groups. But since we are only concerned with the part of the Gersten complex where the codimension is $>0$, all the supports can be moved into horizontal position.

\section{Exactness at the first and second place}\label{section_5}
In this section we prove our main theorem.
\begin{theorem}\label{thm1intext}
Let $\mathcal{O}_K$ be an excellent discrete valuation ring with infinite residue field $k$ and local parameter $\pi$. Let $X$ be a smooth excellent irreducible scheme over $\Spec(\mathcal{O}_K)$. Let $A$ be the semi-local ring associated to a set of closed points $x_1,...,x_l$ 
of the special fibre of $X$. Assume that the map
$$K^M_n(A_{(\pi)})\xrightarrow{} K^M_n(\mathrm{Frac}(A))$$
is injective. Then the map
$$K^M_n(A)\xrightarrow{} K^M_n(\mathrm{Frac}(A))$$
is injective.
\end{theorem}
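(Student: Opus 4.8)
The plan is first to split the statement into a purely geometric claim and a single application of the hypothesis. The map in question factors as
$$\hat{K}^M_n(A)\to \hat{K}^M_n(A_{(\pi)})\to K^M_n(\mathrm{Frac}(A)),$$
and the second arrow is injective by assumption, so it suffices to prove that the localisation map $\hat{K}^M_n(A)\to \hat{K}^M_n(A_{(\pi)})$ is injective. Here $A_{(\pi)}$ is the semilocalisation of $A$ at the height-one primes over $(\pi)$, i.e. at the generic points of the special fibre; since $A$ is regular this is a one-dimensional regular semilocal ring, the DVR-type object one wants to reduce to. I would prove this injectivity \emph{unconditionally} (the DVR hypothesis then playing no further role), by induction on the relative dimension $d$, the base case $d=0$ being trivial since there $A_{(\pi)}=A$.

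For the inductive step I would first pass, if necessary, to a situation with infinite residue field --- precisely the case improved Milnor K-theory is designed to accommodate --- injectivity descending by a transfer/continuity argument. Then Lemma \ref{geomrep2} provides, Zariski-locally around the chosen closed points, an \'etale map $\varphi\colon X\to \mathbb{A}^d_{\O_K}=\mathbb{A}^1_{\mathbb{A}^{d-1}_{\O_K}}$; composing with the projection realises $A$ as the semilocal ring of a smooth relative curve over $B$, the semilocal ring of $\mathbb{A}^{d-1}_{\O_K}$ at the image points, with $A$ \'etale over $\mathbb{A}^1_B$. Crucially, the generic points of the special fibre of $A$ lie over those of $B$, so $A_{(\pi)}$ is a relative curve over the DVR $B_{(\pi)}$. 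I would then use the co-cartesian square of Theorem \ref{Nisdescent} to replace the \'etale neighbourhood by the relative affine line $\mathbb{A}^1_B$ up to controlled error, so that all the Milnor K-groups in play are computed over $\mathbb{A}^1_B$.

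On the relative line I would apply the split exact sequence of Theorem \ref{milnorbasstate} (legitimate since the regular semilocal ring $B$ is factorial with infinite residue field), expressing the relevant Milnor K-group as $K^M_n(B)$ together with residual terms $K^M_{n-1}(B[t]/(\pi'))$ indexed by the horizontal closed points of $\mathbb{A}^1_B$. The splitting is exactly homotopy invariance: the ``constant'' component of a class is computed by a specialisation map independent of the section, and may in particular be read off at the generic point of the special fibre. A class in $\ker(\hat{K}^M_n(A)\to \hat{K}^M_n(A_{(\pi)}))$ therefore has vanishing constant component after restriction to $B_{(\pi)}$, so the induction hypothesis in relative dimension $d-1$ forces the $B$-component to vanish in $\hat{K}^M_n(B)$; the residual components are governed by closed points of $\mathbb{A}^1_B$, smooth of smaller relative dimension, handled by induction via their residue fields together with the exactness-on-the-right statement of Theorem \ref{thmrightexactintext}. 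Tracking these through the descent isomorphism yields the vanishing of the class in $\hat{K}^M_n(A)$.

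The step I expect to be the main obstacle is the compatibility of the three mechanisms at the generic point of the special fibre: one must check that the Nisnevich-descent square of Theorem \ref{Nisdescent} is compatible with both the localisation at $A_{(\pi)}$ and the Bass--Tate splitting over $B$, so that ``dying at the generic point of the special fibre'' is faithfully transported through the descent and the decoupling into a statement one can feed to the induction hypothesis. Closely related is the bookkeeping needed to arrange that the divisors entering the construction are horizontal (coprime to $\pi$) --- this is the precise content of ``moving a class into horizontal position'', and it is where the flatness hypotheses and Theorem \ref{thmrightexactintext} re-enter. Getting this interface right, rather than any single one of the three ingredients, is the crux.
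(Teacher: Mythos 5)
Your overall architecture is the paper's: factor through $A_{(\pi)}$, prove unconditionally that $K^M_n(A)\to K^M_n(A_{(\pi)})$ is injective by induction on the relative dimension $d$, reduce to infinite residue field by a norm argument, present $A$ \'etale over $\mathbb{A}^d_{\O_K}$ via Lemma \ref{geomrep2}, transfer the problem to the semi-local ring of $\mathbb{A}^d_{\O_K}$ by the co-cartesian square of Theorem \ref{Nisdescent}, and finish with the Milnor--Bass--Tate sequence of Theorem \ref{milnorbasstate} over a base of relative dimension $d-1$. The genuine gap is at the decisive step: you fix the coordinate projection $\mathbb{A}^d_{\O_K}=\mathbb{A}^1_{\mathbb{A}^{d-1}_{\O_K}}\to\mathbb{A}^{d-1}_{\O_K}$ \emph{before} seeing the kernel element. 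The split sequence of Theorem \ref{milnorbasstate} does not compute $K^M_n$ of (the semi-local ring of) $\mathbb{A}^1_B$; its middle term is the auxiliary group $K^t_n(B,q)$ of \emph{feasible} tuples, and a class $a\in\ker[K^M_n(A')\to K^M_n(A'_{f'})]$ only enters this sequence if every irreducible polynomial occurring in its symbols is, up to a unit of $B$, monic in $t$ --- equivalently, if all the divisors $V(p_i)$ are finite over $\mathbb{A}^{d-1}_{\O_K}$ --- and if they are in general position. For a projection chosen in advance this fails for completely ordinary elements (e.g.\ a symbol entry that is a polynomial in the base variables alone, whose divisor contains whole fibres of the projection), so the decomposition into a ``constant'' $K^M_n(B)$-component plus residual components, on which your entire inductive step rests, simply does not exist. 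This is precisely why the paper chooses the projection \emph{after} the element: it extracts the irreducible polynomials $p_1,\dots,p_m$ from the symbols of $a$, uses Lemma \ref{geomrep1} together with a projective-closure argument (choice of a linear form $l$, cf.\ the reference to \cite[Lemma 2.3]{SS18}) to get a linear projection that is \emph{finite} on each $V(p_i)$ and moves the images of the singular/intersection loci $W_i$ away from the image points; only then does $a$ lift to $a'\in K^t_n(A'',q)$, and only then are the residue rings $A''[t]/(p_i)$ smooth of relative dimension $d-1$ at the relevant points, so that the induction hypothesis applies to \emph{both} outer terms of the two Bass--Tate sequences, over $A''$ and over $A''_{(\pi)}$, and a diagram chase gives $a'=0$.

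Two further points. First, the Nisnevich descent step needs a concrete mechanism, not ``controlled error'': one observes that a kernel class already dies in $K^M_n(A_f)$ for some $f\notin\pi A$, applies Lemma \ref{geomrep2} with $Z=V(f)$ to produce the \'etale map $\phi$, a ring $A'$ and $f'$ with $A/f\cong A'/f'$, and then Theorem \ref{Nisdescent} yields a co-cartesian square in which injectivity of $K^M_n(A')\to K^M_n(A'_{f'})$ forces injectivity of $K^M_n(A)\to K^M_n(A_f)$ (in a pushout of abelian groups, the kernel of the bottom map is the image of the kernel of the top map); without tying $\phi$ to $V(f)$, the hypotheses of Theorem \ref{Nisdescent} are not satisfied. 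Second, Theorem \ref{thmrightexactintext} plays no role in this argument: the residual components are handled by the induction hypothesis (injectivity into the localization at $(\pi)$) for the semi-local rings $A''[t]/(p_i)$, which is available exactly because the projection was chosen to avoid $p(W_j)$ --- another place where the element-dependent choice of projection is indispensable.
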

\begin{proof}
Let $d$ be the relative dimension of $X$ over $\O_K$. 
Consider the sequence of morphisms
\begin{equation}\label{inductionassumtion}
A\r \varinjlim_{}A_f=A_{(\pi)}\r A_{(\pi)}[1/\pi]=\mathrm{Frac}(A),
\end{equation}
where the colimit is taken over all $f\in A$ such that $V(f)$ is flat over $\Spec(\mathcal{O}_K)$. We are going to show that the first morphism induces an injection on Milnor K-groups.
We proceed by induction on $d$. Our induction assumption is the following: the first arrow in sequence (\ref{inductionassumtion}) induces an injection on Milnor K-groups for any $A$ as in the statement of the theorem if the relative dimension of $\Spec(A)$ over $\O_K$ is $\leq d-1$. The case $d=0$ is trivial.

Let $a\in \mathrm{ker}[K^M_n(A)\xrightarrow{} K^M_n(A_{(\pi)})]$. Then 
there is some $f\in A$ such that $a\in\mathrm{ker}[K^M_n(A)\r K^M_n(A_f)]$. By Proposition \ref{geomrep2} and possibly shrinking $X$ around $x_1,...,x_l$, we may assume that there is a morphism
$$p_{\underline{u}}: X\r \mathbb{A}^d_{\O_K}$$
with the following properties:
\begin{enumerate}
\item The map $V(f)\r \mathbb{A}^d_{\O_K}$ is an embedding.
\item $p_{\underline{u}}$ is \'etale.
\item Let $y_1,...,y_l$ denote the images of the closed points of $A$ under $p_{\underline{u}}$. Let $A'$ be the semi-local ring at $y_1,...,y_l$ and let $f'\in A'$ be chosen according to (1) such that 
$A/f\cong A'/f'$. Then $A/f\cong A\otimes_{A'}A'/f'$. 
\end{enumerate}
Properties (1) and (2) follow from Proposition \ref{geomrep2}(3) and (3) follows from Proposition \ref{geomrep2}(4).
By Theorem \ref{Nisdescent} there is a co-Cartesian square
\[
  \xymatrix{
    K^M_n(A')\ar[r] \ar[d] & K^M_n(A'_{f'})
    \ar[d]
  \\
  K^M_n(A)  \ar[r] & K^M_n(A_f).
  }
\]
Since the square is co-Cartesian, the injectivity of the upper horizontal map would imply the injectivity of the lower horizontal map. It therefore suffices to show that the map
$$ K^M_n(A')\r  K^M_n(A'_{f'})$$
is injective. Note that $\pi$ does not divide $f'$. Let $a$ be in the kernel of this map and denote by $p_1,...,p_m\in \O_K[t_1,...,t_d]$ the irreducible polynomials appearing in the symbols of $a$ with $p_i\in A'^*$. Let $q_1,...,q_{m'}\in \O_K[t_1,...,t_d]$ be the irreducible polynomials appearing in the relations which make $a$ zero in $ K^M_n(A'_{f'})$. Let $W_i\subset V(p_i), i=0,...,m$ be the join of the singular locus of $V(p_i)$ with $\bigcup_{j\neq i}V(p_i)\cap V(p_j)$.

Next we construct a linear projection
$$p:\mathbb{A}^d_{\O_K}\r \mathbb{A}^{d-1}_{\O_K}$$
such that $p|_{V(p_j)}$ and $p|_{V(q_j)}$ are finite and such that $p(y_i)\notin p(W_j)$ for all $i,j$.
First, by the following argument  we may assume that the special fibres of ${V(p_j)}$ and ${V(q_j)}$ are dense in the special fibres of their closures $\overline{V(p_j)}$ and $\overline{V(q_j)}$: replace $V(p_i)$ (and $V(q_i)$) by $\overline{V(p_i)}$ in $\mathbb{P}^d_{\O_K}$ and take linear form $l$ such that the hyperplane $V(l)$ does not contain any of the connected components of the special fibre of the $\overline{V(p_i)}$ or any of the $y_i$. In this case $\overline{V(p_i)}\cap (\mathbb{P}^d_{\O_K}\setminus V(l))$ is dense in $\overline{V(p_i)}$.
Then by Proposition \ref{proposition_hyperplanes} one can choose a projection 
$$p:\mathbb{A}^d_{\O_K}\cong\mathbb{P}^d_{\O_K}\setminus V(l)\r \mathbb{A}^{d-1}_{\O_K}$$
which satisfies the required properties. The finiteness property follows from \cite[Lemma 2.3]{SS18}.

We now continue to follow the proof in \cite[Sec. 6]{Ke09}. Let $A''$ be the semi-local ring associated to the points $p(y_1),...,p(y_l)\in \mathbb{A}^{d-1}_{\O_K}$. Then $A''\subset A'$ is a local ring extension and because $p|_{V(p_i)}$ is finite it follows that $p_i\in A''[t]$ is irreducible and can be chosen to be monic. Let $q\in A''[t] $ be a monic polynomial such that the intersection of $V(q)$ with $p^{-1}(p(y_i))$ consists exactly of the points $y_1,...,y_l$ which are in the fibre of $p(y_i)$ for all $i=1,...,l$. Then $(q,p_i)=1$ for all $i=0,...,m$. By our choice of $q$, there is a natural map 
$$K^t_n(A'',q)\r K^M_n(A')$$
sending a feasible $n$-tuple in the sense of Definition \ref{definition_feasible_tuples}(5) to the same tuple, or rather the induced symbol, in $ K^M_n(A')$. That $(q,p_i)=1$ implies that $a$ is in the image of some element $a'\in K^t_n(A'',q)$. We show that $a'=0$. Consider the commutative diagram 

\[
  \xymatrix{
   0\ar[r] & K^M_n(A'')\ar[r] \ar[d] & K^t_n(A'',q)\ar[d] \ar[r]^-\partial & \bigoplus_{g}K^M_{n-1}(A''[t]/(g)) \ar[d] \ar[r] & 0
  \\
 0\ar[r] &  K^M_n(A''_{(\pi)})  \ar[r] & K^t_n(A''_{(\pi)})\ar[r] & \bigoplus_{g}K^M_{n-1}(A''_{(\pi)}[t]/(g))  \ar[r] & 0
  }
\]
whose rows are exact by Proposition \ref{milnorbasstate}. The direct sum in the upper row is taken over all monic irreducible $g\in A[t]$ with $(g,q)=1$. The direct sum in the lower row is taken over all monic irreducible $g\in A''_{(\pi)}[t]$. First note that because $p|_{V(q_i)}$ is finite it follows that $a'$ maps to zero in $K^t_n(A''_{(\pi)})$. The left vertical map is injective by our induction assumption and $A''[t]/(g)$ is regular for all $g$ where $\partial(a')$ is possibly $\neq 0$. On these components the right vertical map is therefore also injective by our induction assumption. This implies that $a'=0$. 
\end{proof}

\begin{theorem}\label{thm2intext}
Let $\mathcal{O}_K$ be an excellent discrete valuation ring with infinite residue field $k$. Let $X$ be a smooth excellent irreducible scheme over $\Spec(\mathcal{O}_K)$. Let $x\in X$ be a closed point in the special fibre of $X$ over $\mathcal{O}_K$ and $A:=\O_{X,x}$. 
Then the sequence
\begin{equation}\label{eqn2}
K^M_n(A)\xrightarrow{} K^M_n(\mathrm{Frac}(A))\r \bigoplus_{x\in \Spec(A)^{(1)}} K^M_{n-1}(x)
\end{equation}
is exact. 
\end{theorem}

\begin{proof}
Let $\eta$ be the generic point of the component of the special fibre of $X$ which contains $x$ and let $A_\eta:=\O_{X,\eta}$. The sequence
\begin{equation}\label{eqn1}
K^M_n(A_\eta)\xrightarrow{} K^M_n(\mathrm{Frac}(A))\r K^M_{n-1}(\eta)
\end{equation} 
is exact (even on the right) by \cite[Prop. 7.1.7]{GS06}; in \textit{loc. cit.} $U_n$ is exactly the image of $K^M_n(A)$ in $K^M_n(K)$.
Our contribution is to show that the sequence
\begin{equation}\label{eqn3}
K^M_n({A})\xrightarrow{} K^M_n(A_\eta)\r \bigoplus_{x\in \Spec(A)^{(1)}-\eta} K^M_{n-1}(x)
\end{equation}
is exact. Before proving this, note that the exactness of the sequences (\ref{eqn1}) and (\ref{eqn3}) imply the exactness of sequence (\ref{eqn2})\footnote{We would like to emphasise the analogy with Theorem \ref{thm1intext}: in both theorems the general case is reduced to the case of a discrete valuation ring.}: consider the diagram
\[
  \xymatrix{
    &  K^M_n(\mathrm{Frac}(A))  \ar[r] &  K^M_{n-1}(\eta)
  \\
  K^M_n(A)  \ar[r] & K^M_n(A_\eta) \ar[r] \ar[u] &  \underset{x\in \Spec(A)^{(1)}-\eta}{\bigoplus} K^M_{n-1}(x).
  }
\]
If $a\in \mathrm{ker}[K^M_n(\mathrm{Frac}(A))\r \bigoplus_{x\in \Spec(A)^{(1)}} K^M_{n-1}(x)]$, then $a\in \mathrm{ker}[K^M_n(\mathrm{Frac}(A))\r K^M_{n-1}(\eta)]$ and therefore by (\ref{eqn1}), $a\in \mathrm{ker}[K^M_n(A_\eta)\r \bigoplus_{x\in \Spec(A)^{(1)}-\eta} K^M_{n-1}(x)]$ which by the exactness of (\ref{eqn3}) implies that $a\in K^M_n(A)$.

We now return to proving the exactness of sequence (\ref{eqn3}). We proceed by induction on the relative dimension $d$ of $X$ over $\O_K$ in (\ref{eqn3}). Assume that (\ref{eqn3}) is exact if the relative dimension is $\leq d-1$. Note that (\ref{eqn3}) is exact by definition if $d=0$.

From here on we closely follow the proof given in \cite[Sec. 4.2 and 4.3]{Ke05}. First note that every element $a\in\mathrm{ker}[K^M_n(A_\eta)\r \bigoplus_{x\in \Spec(A)^{(1)}-\eta} K^M_{n-1}(x)]$ is induced by an element $a_0\in K^M_n(A[1/f])$ for some $f\in A\setminus \{0, \pi A\}$ and $\partial_x(a_0)=0$ for every $x\in (V(f))^{(0)}$. By Proposition \ref{geomrep2} and possibly shrinking $X$ we can find an \'etale morphism 
$$p_{\underline{u}}:X\r \mathbb{A}^d_{\O_K}$$
such that there is an $f'\in \O_{\mathbb{A}^d_{\O_K},p_{\underline{u}}(x)}=:A'$ and an isomorphism
$$p_{\underline{u}}^*:A'/(f')\r A/(f).$$
Consider the commutative diagram
\begin{equation}\label{diagram_reduction_Ad}
  \xymatrix{
  K^M_n(A') \ar[d]\ar[r]  &  K^M_n(A'[1/f'])  \ar[r] \ar[d]^j & \underset{x\in V(f')^{(0)}}{\bigoplus} K^M_{n-1}(x)  \ar[d]^{\cong}
  \\
  K^M_n(A)  \ar[r]^i & K^M_n(A[1/f]) \ar[r]  &  \underset{x\in V(f)^{(0)}}{\bigoplus} K^M_{n-1}(x).
  }
\end{equation}
in which the morphism on the right is an isomorphism since $p_{\underline{u}}^*$ is an isomorphism. By Lemma 4.2.2 of \textit{loc. cit.} we have that $K^M_n(A[1/f])=\mathrm{im}(i)+\mathrm{im}(j)$. Note that the proof of \textit{loc. cit.} is formulated in the context of smooth varieties over a field but transfers word by word to our situation. By a diagram chase this reduces the exactness of the second row to showing the exactness of the first row.

Before we proceed we introduce some notation. Let $Y$ be a smooth connected excellent scheme over $\Spec(\mathcal{O}_K)$. Let $\tilde{\eta}$ be the generic point of $Y$. Let $\{\eta_i\}$ be the set of generic points of the special fibre of $Y$. Set $$\bar{K}^M_n(Y):=\mathrm{ker}[K^M_n(\tilde \eta)\r \bigoplus_{x\in Y^{(1)}-\{\eta_i\}} K^M_{n-1}(x)].$$

We now show that the Theorem \ref{thm2intext} holds for $\mathbb{A}^d_{\O_K}$. This then in particular implies that the top row of diagram $(\ref{diagram_reduction_Ad})$ is exact and therefore by the above that Theorem \ref{thm2intext} holds in general. We still proceed by the same induction on the relative dimension $d$.
For $y\in (\mathbb{A}^d_{\O_K})^{(d+1)}$ we have that
$$\underset{y\in U\subset \mathbb{A}^d_{\O_K}}{\varinjlim} \bar{K}^M_n(U)=\mathrm{ker}[K^M_n(\tilde \eta)\r \bigoplus_{x\in \Spec(\O_{\mathbb{A}^d_{\O_K},y})^{(1)}-\eta} K^M_{n-1}(x)],$$
where $\eta$ is the generic point of the special fibre of $\mathbb{A}^d_{\O_K}$ and the colimit is taken over all open subschemes of $\mathbb{A}^d_{\O_K}$ containing $x$.
In particular, every element $a\in \mathrm{ker}[K^M_n(\tilde \eta)\r \bigoplus_{x\in \Spec(\O_{\mathbb{A}^d_{\O_K},y})^{(1)}-\eta} K^M_{n-1}(x)]$ extends to an element $\bar{a}\in \bar{K}^M_n(U)$ for some $U\subset \mathbb{A}^d_{\O_K}$ containing $y$. Let $D_i, i=1,...,\lambda$, be the irreducible divisors of $\mathbb{A}^d_{\O_K}\setminus U $. The $D_i$ are flat over $\mathcal{O}_K$ since $y$ is in the special fibre, which has just one component, and $U$ is open. As in the proof of Theorem \ref{thm1intext}, we choose $U$ and $\mathbb{A}^d_{\O_K}$ such that for each $i$ the special fibre of $D_i$ is dense in the special fibre of the closure of $D_i$ in $\mathbb{P}^d_{\O_K}$. For all $i$ let $\xi_i$ denote the generic point of $D_i$. Note that $\partial_{\xi_i}(\bar{a})\in K^M_{n-1}(\xi_i)$ is not necessarily zero but in the image of $K^M_{n-1}(H^0(\Omega_i,\O_{\Omega_i}))$\footnote{Here and in the following the Milnor $K$-theory $K^M_n(R)$ of a (non-local) ring $R$ is defined naively as the $n$-fold tensor product $R\otimes...\otimes R$ modulo relations generated by $\{a\otimes (1-a)|a,1-a\in R^\times\}$ and $\{a\otimes (-a)|a\in R^\times\}$.} for some affine open dense subscheme $\Omega_i\subset D_i$. We choose a projection
$$p:\mathbb{A}^d_{\O_K}\r \mathbb{A}^{d-1}_{\O_K}$$
such that for all $i$
\begin{enumerate}
\item $p$ is finite when restricted to $D_i$ (again by \cite[Lemma 2.3]{SS18}), and
\item $p(y)\notin p(D_i\setminus\Omega_i)$.
\end{enumerate}
Let $A''$ be the local ring $\O_{\mathbb{A}^{d-1}_{\O_K},p(y)}$ and consider the base change 
$$p:\mathbb{A}^1_{A''}\r \Spec(A'').$$
Let $q\in A''[t] $ be a monic such that the intersection of $V(q)$ with $p^{-1}(p(y_i))$ consists exactly of $y$. Consider the exact sequence of Theorem \ref{milnorbasstate}
$$0\r K^M_n(A'')\r K^t_n(A'',q)\xrightarrow{\partial} \bigoplus_{g}K^M_{n-1}(A''[t]/(g))\r 0,$$
where the direct sum is taken over all monic irreducible $g\in A''[t]$ with $(g,q)=1$ and $\partial=\oplus \partial_g$. By conditions $(1)$ and $(2)$, one can pick a $b\in K^t_n(A'',q)$ such that $ \partial_g(b)=\partial_{\xi_i}(\bar{a})=\partial_{g}(\bar{a})$ and $0$ otherwise. This implies that $\bar{a}-b\in \bar{K}^M_n(\mathbb{A}^1_{A''})$; indeed, by the definition of feasible in \ref{definition_feasible_tuples}, $\partial_x(b)=0$ for all $x$ of codimension one which are not given by monic irreducible polynomials. By our choice of $q$, and since $y$ is not contained in any of the $D_i$, we get that $b$ induces an element in $K^M_n(\O_{\mathbb{A}^d_{\O_K},y})$. Therefore, if $\bar{a}-b\in \bar{K}^M_n(\mathbb{A}^1_{A''})$ is in the image of $K^M_n(A''[t])$, then also $a$ is in the image of $K^M_n(\O_{\mathbb{A}^d_{\O_K},y})$. We may therefore assume that $\bar{a}\in \bar{K}^M_n(\mathbb{A}^1_{A''})$. But $\bar{K}^M_n(\Spec(A''))\cong \bar{K}^M_n(\mathbb{A}^1_{A''})$ by homotopy invariance (see below), and by our induction assumption the map $K^M_n(A'')\r \bar{K}^M_n(\Spec(A''))$ is surjective. This is summarised in the commutative diagram 
\[
  \xymatrix{
   K^M_n(A''[t]) \ar[r]^-{}  &  \bar{K}^M_n(\Spec(A''[t]))    & 
  \\
 K^M_n(A'') \ar[u] \ar@{->>}[r] & \ar[u]_{\cong} \bar{K}^M_n(\Spec(A'')).
  }
\]
For the homotopy invariance of $\bar{K}^M_n$ note that we have a commutative diagram with exact rows and columns
\[
  \xymatrix{
   & 0 \ar[d] & 0 \ar[d] & & \\
 & \bar{K}^M_n(A'') \ar[d]\ar[r]^-{h}  &  \bar{K}^M_n(\Spec(A''[t]))   \ar[d] & &
  \\
 0 \ar[r] & K^M_n(\mathrm{Frac}(A'')) \ar[d] \ar[r] & K^M_n(\mathrm{Frac} (A'')(t)) \ar[r]^-{\partial} \ar[d]^{\partial'} &  \underset{\pi}{\bigoplus} K^M_{n-1}(\mathrm{Frac} (A'')[t]/\pi)\ar[r]& 0 \\
 & \underset{x\in \Spec(A'')^{(1)}-\eta''}{\bigoplus} K^M_{n-1}(x) \ar[r] & \underset{x\in \Spec(A''[t])^{(1)}-\eta''_t}{\bigoplus} K^M_{n-1}(x) & &
  }
\]
where in the bottom row $\eta''$ denotes the generic point of the special fibre of $\Spec(A'')$ and $\eta''_t$ the generic point of the special fibre of $\Spec(A''[t])$. The columns are exact by the definition of $\bar{K}^M_n$. The middle row is the original Milnor-Bass-Tate sequence for fields (Theorem \ref{milnorbasstate_fields}), in which the sum is taken over all monic irreducible polynomials $\pi\in \mathrm{Frac} (A'')[t]$. The injectivity of $h$ follows from the commutativity of the upper square and the surjectivity from the fact that $\mathrm{ker}(\partial')\subset \mathrm{ker}(\partial)$ and that the lower horizontal map is injective. Indeed, if $F$ is a field, then the map $K_{n-1}^M(F)\to K_{n-1}^M(F(t))$ is injective since it has an inverse given by specialisation at infinity, which associates to a polynomial its highest coefficient.
\end{proof}

\begin{corollary}\label{maincorollary}
Let $\mathcal{O}_K$ be an excellent discrete valuation ring with residue field $k$. Let $X$ be a smooth scheme over $\Spec(\mathcal{O}_K)$. 
Then the sequence of sheaves
$$\hat{\mathcal{K}}^M_{n,X}\xrightarrow{i} \bigoplus_{x\in X^{(0)}}i_{x*}K^M_n(x)\rightarrow \bigoplus_{x\in X^{(1)}}i_{x*}K^M_{n-1}(x)\r... \bigoplus_{x\in X^{(d)}}i_{x*}K^M_{n-d}(x)\r 0$$
is exact, in which $d=\dim X$. Furthermore, if $\O_{X,\eta}$ is the discrete valuation ring induced by a generic point of the special fibre, then $i$ is injective if the map
$$\hat{K}^M_{n}(\O_{X,\eta})\r K^M_{n}(\mathrm{Frac}(\O_{X,\eta}))$$
is injective for all such $\eta$.
\end{corollary}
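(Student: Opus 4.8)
The plan is to deduce the corollary from the three theorems of the previous sections by testing exactness stalkwise and matching each portion of the Gersten complex to the theorem that governs it. Since exactness of a complex of sheaves may be checked on stalks, it suffices to show that for every point $x\in X$ the stalk complex
$$\hat{K}^M_n(\O_{X,x})\xrightarrow{i}\bigoplus_{y\in\Spec(\O_{X,x})^{(0)}}K^M_n(k(y))\r\bigoplus_{y\in\Spec(\O_{X,x})^{(1)}}K^M_{n-1}(k(y))\r\cdots$$
is exact in the asserted range. Writing $A:=\O_{X,x}$, this is a regular local ring and hence a domain, so $\Spec(A)$ is irreducible and the term $\bigoplus_{y\in\Spec(A)^{(0)}}$ is simply $K^M_n(\mathrm{Frac}(A))$. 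Moreover $A/\pi A$ is smooth over $k$, hence regular and a domain, so $\pi A$ is prime and $A_{(\pi)}=\O_{X,\eta}$ is a discrete valuation ring with $\mathrm{Frac}(A_{(\pi)})=\mathrm{Frac}(A)$, where $\eta$ is the generic point of the component of the special fiber through $x$. The earlier theorems apply because their standing hypotheses (excellence of $\O_K$, and smoothness and excellence of $X$) are in force.

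For exactness at every term $\bigoplus_{y\in X^{(p)}}K^M_{n-p}(k(y))$ with $p\geq1$ I would quote Theorem \ref{thmrightexactintext} directly: it asserts $A^p(\Spec(A))=0$ for $p>0$, which is exactly exactness of the stalk complex at all places strictly to the right of the $K^M_n(\mathrm{Frac}(A))$ term. This settles the whole sequence except for its first two spots.

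For exactness at the $K^M_n(\mathrm{Frac}(A))$ term I would invoke Theorem \ref{thm2intext}. Its hypothesis is the exactness of the localization sequence (\ref{eqn1}) for the discrete valuation ring $\O_{X,\eta}$, which holds unconditionally because the Gersten complex of a discrete valuation ring is known to be exact except at its first place. The theorem then gives exactness of $K^M_n(A)\to K^M_n(\mathrm{Frac}(A))\xrightarrow{\partial}\bigoplus_{y\in\Spec(A)^{(1)}}K^M_{n-1}(k(y))$ for usual Milnor $K$-theory. To pass to the improved sheaf, note that the canonical map $K^M_n(A)\to\hat{K}^M_n(A)$ makes the composite $K^M_n(A)\to\hat{K}^M_n(A)\xrightarrow{i}K^M_n(\mathrm{Frac}(A))$ equal to the natural map, so $\im(\text{natural})\subseteq\im(i)$; since the sequence is a complex we also have $\im(i)\subseteq\ker(\partial)$, and Theorem \ref{thm2intext} identifies $\ker(\partial)$ with $\im(\text{natural})$. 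The three relations squeeze together to give $\im(i)=\ker(\partial)$, i.e.\ exactness at the second place.

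Finally, for the injectivity of $i$ (the ``Furthermore'' clause) I would apply Theorem \ref{thm1intext} with this $A=\O_{X,x}$: it reduces injectivity of $\hat{K}^M_n(A)\to K^M_n(\mathrm{Frac}(A))$ to injectivity of $\hat{K}^M_n(A_{(\pi)})\to K^M_n(\mathrm{Frac}(A))$, and $A_{(\pi)}=\O_{X,\eta}$ is precisely the discrete valuation ring appearing in the hypothesis, so the assumed injectivity over $\O_{X,\eta}$ yields injectivity of $i$ at every stalk. The only genuinely delicate point is the bookkeeping at the second place, where one must keep track of the distinction between the sheaf $\hat{K}^M_n$ and the usual $K^M_n$ in terms of which Theorems \ref{thm1intext} and \ref{thm2intext} are phrased; the sandwich argument above makes this harmless, since only the equality of $\im(i)$ with the relevant kernel is needed and not an identification of $\hat{K}^M_n(A)$ with $K^M_n(A)$. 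Everything else is a direct citation of the three theorems together with the known exactness of the Gersten sequence for a discrete valuation ring.
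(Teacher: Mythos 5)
Your overall architecture coincides with the paper's own proof: exactness is checked on stalks, Theorem \ref{thmrightexactintext} disposes of every place to the right of the $K^M_n(\mathrm{Frac}(\O_{X,x}))$ term, Theorem \ref{thm2intext} handles that term (its hypothesis (\ref{eqn1}) indeed holds unconditionally, being the known exactness of the Gersten sequence of a discrete valuation ring away from the first spot), and Theorem \ref{thm1intext} gives the conditional injectivity; your ``sandwich'' reconciling $K^M_n$ with $\hat{K}^M_n$ is a worthwhile point that the paper leaves implicit. However, there is a genuine gap: Theorems \ref{thm1intext} and \ref{thm2intext} are stated, and proved, only for the (semi-)local ring at a finite set of \emph{closed} points of $X$, while you invoke them with $A=\O_{X,x}$ for an arbitrary point $x$. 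This restriction is not cosmetic: both proofs proceed by induction on the relative dimension and rest on the presentation Lemmas \ref{geomrep1} and \ref{geomrep2}, which require finite sets of closed points. The paper inserts exactly the reduction you are missing: any element of $\ker[\hat{K}^M_n(\O_{X,x})\to K^M_n(\mathrm{Frac}(\O_{X,x}))]$, resp.\ of the kernel of the boundary map out of $K^M_n(\mathrm{Frac}(\O_{X,x}))$, is induced from data over $\O_{X,y}$ for a suitably chosen closed point $y\in\overline{\{x\}}$, and then the closed-point theorems applied at $y$ push forward to $x$. For the second of these kernels the choice needs care: $\Spec(\O_{X,y})^{(1)}$ is strictly larger than $\Spec(\O_{X,x})^{(1)}$, so $y$ must be chosen away from the closures of the finitely many codimension-one points of $X$ at which the tame symbol of the given element is nonzero; otherwise the element need not lie in the relevant kernel over $\O_{X,y}$, and the theorem at $y$ says nothing about it.

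A second defect of the stalkwise step as you wrote it: if $x$ lies in the generic fiber, then $\pi$ is a unit in $\O_{X,x}$, so $\O_{X,x}/\pi\O_{X,x}=0$, there is no component of the special fiber through $x$, and $A_{(\pi)}$ is undefined; your assertions that ``$A/\pi A$ is smooth over $k$'' and ``$A_{(\pi)}=\O_{X,\eta}$'' are vacuous or false there, and Theorems \ref{thm1intext} and \ref{thm2intext} cannot even be formulated for such a stalk. These stalks are local rings of the smooth $K$-variety $X_K$, hence equicharacteristic, and are covered by the Gersten conjecture over fields (Kerz; see Remark \ref{remark_introduction}(1)), or alternatively by the same reduction to suitable closed points; one way or another this case must be addressed separately rather than folded into the mixed-characteristic argument.
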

\begin{proof}
We can check the statements locally on $X$.
Let $y\in X$ be a (not necessarily closed) point. We may assume that $y$ is contained in the special fibre of $X$ over $\Spec(\mathcal{O}_K)$ since the generic fibre is open in $X$ and the corollary is known in equal characteristic by \cite[Prop. 10(8)]{Ke10} (see also Remark \ref{remark_introduction}(1)). Let $\eta$ be the generic point of the component of the special fibre of $X$ which contains $y$.
We need to show that the sequence
\begin{equation*}
 \hat K^M_n({\O_{X,y}})\xrightarrow{\alpha} K^M_n(\mathrm{Frac}(\O_{X,y}))\xrightarrow{\beta} \bigoplus_{x\in \Spec(\O_{X,y})^{(1)}} K^M_{n-1}(x)\r ...\r \bigoplus_{x\in \Spec(\O_{X,y})^{(d)}}K^M_{n-d}(x)\r 0
\end{equation*}
is exact and that $\alpha$ is injective if we assume that $\hat K^M_{n}(\O_{X,\eta})\r K^M_{n}(\mathrm{Frac}(\O_{X,\eta}))$ is injective.
By Theorem \ref{thmrightexactintext} it suffices to show that the sequence
\begin{equation}\label{eqn_4.6}
\hat K^M_n({\O_{X,y}})\xrightarrow{\alpha} K^M_n(\mathrm{Frac}(\O_{X,x}))\xrightarrow{\beta} \bigoplus_{x\in \Spec(\O_{X,y})^{(1)}} K^M_{n-1}(x)
\end{equation}
is exact at the second place and that $\alpha$ is injective if $\hat K^M_{n}(\O_{X,\eta})\r K^M_{n}(\mathrm{Frac}(\O_{X,\eta}))$ is injective. 
If $y$ is closed and the residue field of $\O_{K}$ is infinite, then (\ref{eqn_4.6}) is exact by Theorem \ref{thm2intext}, and $\alpha$ is injective under the given assumption by Theorem \ref{thm1intext}. 

If $y$ is not closed, but still assuming that $k$ is infinite, we can reduce to the case of $y$ being closed as follows: 
for each element $a$ in the kernel of $\alpha$ we can choose a closed point $y'$ contained in $\overline{\{y\}}$ such that $a$ is induced by an element of $\hat K^M_n({\O_{X,y'}})$ (in fact, since the residue field of $\O_K$ is infinite, $\hat K^M_n({\O_{X,y'}})\cong K^M_n({\O_{X,y'}})$). The statement then follows from the commutative diagram
\[
  \xymatrix{
   \hat K^M_n(\O_{X,y}) \ar[r]^-{}  &  \hat{K}^M_n(\O_{X,\eta})    \ar@{^{(}->}[r] & {K}^M_n(\mathrm{Frac}(\O_{X,y}))
  \\
 \hat K^M_n(\O_{X,y'}) \ar[u] \ar@{^{(}->}[r] & \ar@{=}[u]_{} \hat{K}^M_n(\O_{X,\eta}) \ar@{^{(}->}[r] & \ar@{=}[u]_{} {K}^M_n(\mathrm{Frac}(\O_{X,y'}))
  }
\]
since by the last paragraph we know that the horizontal arrow on the lower left is injective. If $b$ is in the kernel of $\beta$, then let $\{x_i\}$ be the finite set of codimension one points of $X$, where $\partial_x(b)\neq 0$, and $D=\cup \overline{\{x_i\}}$ be the corresponding divisor. Then, by assumption, $y\notin D$, and therefore there exists a closed point $y'\in \overline{\{y\}}-\overline{\{y\}}\cap D$.
The statement then follows from the commutative diagram
\[
  \xymatrix{
  \hat K^M_n(\O_{X,y}) \ar[r]^-{}   &  {K}^M_n(\mathrm{Frac}(\O_{X,y}))    \ar[r] &  \bigoplus_{x\in \Spec(\O_{X,y})^{(1)}} K^M_{n-1}(x)
  \\
 \hat K^M_n(\O_{X,y'}) \ar[u]  \ar[r] & \ar@{=}[u]_{} {K}^M_n(\mathrm{Frac}(\O_{X,y'})) \ar[r] &   \bigoplus_{x\in \Spec(\O_{X,y'})^{(1)}} K^M_{n-1}(x)
  }
\]
since by the last paragraph we know that the lower sequence is exact and $b$ is also in the kernel of the lower right map.

Finally, it follows from a norm argument, along the lines of the proof of the isomorphism $G(A)\to \hat{G}(A)$ for $G\in \mathfrak{CT}$ in the proof of \cite[Prop. 9]{Ke10}, that we can drop the assumption that the residue field of $\O_{K}$ is infinite. 
We give the argument for this reduction for the injectivity, the exactness at the second place can be shown similarly.
We only need to worry about the case that $\O_K$ has finite residue field $k$. Let $R:=\O_{X,y}$ and let $L$ denote its residue field, which is a finitely generated, separable field extension of $k$. So we may realise $L$ as a finite separable extension of a rational function field $k(\ul t):=k(t_1,\dots,t_d)$.
We fix a prime $p$ which is coprime to $|K:k(\ul t)|$, and let $k^{(r)}$ be the unique degree $p^r$ extension of the finite field $k$. Note that $L\otimes_kk^{(r)}=L\otimes_{k(\ul t)} k^{(r)}(\ul t)$ is the tensor product of finite field extensions of coprime degree, hence is a field.
Let $\O_K^{(r)}$ be the finite \'etale extension of $\O_K$ corresponding to the extension $k^{(r)}$ of $k$, and set $R^{(r)}:=R\otimes_{\O_K}\O_K^{(r)}$. Note that $R^{(r)}$ is local since $R^{(r)}/\mathfrak m_RR^{(r)}=L\otimes_kk^{(r)}$ is a field, and essentially smooth over $\O_K^{(r)}$. By the same argument, we may additionally assume that, denoting $R_\eta:= \O_{X,\eta}$, the ring $R_\eta^{(r)}:= R_\eta\otimes_{\O_K}\O_K^{(r)}$ is local and essentially smooth.  
Consider the commutative diagram
\[
  \xymatrix{
  \hat K^M_n(R^{(r)}) \ar[r]^-{} \ar@{-->}@/^5mm/[d]^N  &  {K}^M_n(R_\eta^{(r)})   \ar@{-->}@/^5mm/[d]^N &
  \\
 \hat K^M_n(R) \ar[u]  \ar[r] & \ar[u]_{} {K}^M_n(R_\eta)) \ar@{^{(}->}[r] & {K}^M_n(\mathrm{Frac}(R)), 
  }
\]
in which the dotted arrows correspond to the norm map $N$ defined in \cite{Ke09,Ke10}. Let $a$ be in the kernel of the lower horizontal map. For primes $p$ and $q$ as above with $(p,q)=1$, by a filtered colimit argument we get that for $r$ large enough $p^ra=0=q^ra$ and therefore $a=0$.

\end{proof}

\begin{remark}[Finite coefficients]
Corollary \ref{maincorollary} also holds with finite coefficients, i.e. if we replace $\hat K^M_n$ by $\hat K^M_n/m$ and $K^M_n$ by $K^M_n/m$. Indeed, the proofs go through exactly the same way. Note for example that the Milnor-Bass-Tate sequence of Theorem \ref{milnorbasstate} remains exact after tensoring with $\otimes_{\mathbb Z}\mathbb Z/n\mathbb Z$ since it is split exact.
\end{remark}




\section{Gersten conjecture for henselian DVRs with finite residue field}\label{Gerstenreldimzero}

In this section we prove the Gersten conjecture for henselian DVRs with finite residue field. The case of a complete DVR with finite residue field is proved by Dahlhausen in \cite[Cor. 4.4]{Da18}. We deduce our result from his using an algebraization technique. The result is independent of the other results of the article.
\begin{theorem}\label{reldimzero}
Let $\O$ be an excellent henselian discrete valuation ring with quotient field $F$ and finite residue field $\kappa$. Then the sequence
$$0\r \hat{K}^M_n(\O)\r {K}^M_n(F) \r K^M_{n-1}(\kappa)\r 0$$
is exact.
\end{theorem}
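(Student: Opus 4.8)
The plan is to deduce the statement from the complete case of \cite{Da18} by comparing $\O$ with its $\mathfrak{m}$-adic completion $\hat{\O}$. Writing $\hat F=\mathrm{Frac}(\hat\O)$ and using that $\O$ and $\hat\O$ share the residue field $\kappa$ and all truncations $\O/\mathfrak{m}^N=\hat\O/\mathfrak{m}^N$, the two tame-symbol sequences fit into a commutative ladder
\[
\begin{CD}
0 @>>> \hat{K}^M_n\O @>\alpha>> K^M_n F @>\partial>> K^M_{n-1}\kappa @>>> 0\\
@. @VfVV @VgVV @| @.\\
0 @>>> \hat{K}^M_n\hat{\O} @>\hat{\alpha}>> K^M_n \hat{F} @>\hat{\partial}>> K^M_{n-1}\kappa @>>> 0
\end{CD}
\]
whose bottom row is exact by \cite{Da18}. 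The strategy is to reduce every assertion about the top row to the bottom row by controlling the vertical comparison maps, drawing the algebraization input from the excellence of $\O$.

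Three of the four verifications are comparatively soft. Surjectivity of $\partial$ holds for any discrete valuation ring: lift a residue symbol $\{\bar u_1,\dots,\bar u_{n-1}\}$ to units of $\O$ and append $\pi$. For the two left vertical maps I would use that, $\O$ being excellent, the regular morphism $\O\to\hat\O$ presents $\hat\O$ as a filtered colimit $\hat\O=\colim_i B_i$ of smooth $\O$-algebras (N\'eron--Popescu); the reduction $B_i\to\hat\O\to\kappa$ is a $\kappa$-point over the closed point of the henselian ring $\O$, hence lifts to a section $s_i\colon B_i\to\O$, and inverting $\pi$ yields sections on the generic fibres as well. By continuity of improved Milnor $K$-theory both $f\colon\hat K^M_n\O\to\hat K^M_n\hat\O$ and $g\colon K^M_nF\to K^M_n\hat F$ are then split injective. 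Injectivity of $\alpha$ is a one-step chase: if $\alpha(x)=0$ then $\hat\alpha(f(x))=g(\alpha(x))=0$, so $f(x)=0$ by injectivity of $\hat\alpha$, whence $x=0$ by injectivity of $f$.

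The essential point, which I expect to be the main obstacle, is exactness in the middle: that every $y\in\ker\partial$ lies in the image of $\alpha$. By the complete case $g(y)$ comes from a class $\hat x\in\hat K^M_n\hat\O$, and $\hat x$ is already defined over some $B_{i_0}$; applying $s_{i_0}$ produces a candidate $x\in\hat K^M_n\O$. The difficulty is that $s_{i_0}$ followed by $\O\to\hat\O$ is not the structure map $B_{i_0}\to\hat\O$ but only agrees with it modulo $\mathfrak{m}$, so $g(\alpha(x))$ differs from $g(y)$ by a sum of symbols each carrying an entry in $1+\mathfrak{m}^N\hat\O$. These error symbols lie in the unit part $\hat U_n$ but, having entries in $\hat\O^{\ast}$ rather than $\O^{\ast}$, are not visibly in the image of $U_n$; moreover injectivity of $g$ alone only reproduces the circular identity $g^{-1}(\hat U_n)=\ker\partial$. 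The real work is to absorb this defect.

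To close the gap I would make the algebraization quantitative: as $\O$ is excellent and henselian it enjoys the Artin approximation property, so $s_{i_0}$ may be chosen to agree with the structure map to any prescribed order $\mathfrak{m}^N$, pushing the defect into symbols with an entry in $1+\mathfrak{m}^N\hat\O$ for arbitrarily large $N$. Combined with continuity of improved Milnor $K$-theory along the common tower $\{\O/\mathfrak{m}^N\}=\{\hat\O/\mathfrak{m}^N\}$, and with the fact that over a henselian ring $1+\mathfrak{m}$ is divisible by every integer prime to $\mathrm{ch}(\kappa)=p$ (killing the prime-to-$p$ part of principal-unit symbols), this should force the defect into the image of $U_n$ in the limit. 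Equivalently, the heart of the matter is to prove that $f\colon\hat K^M_n\O\to\hat K^M_n\hat\O$ is an \emph{isomorphism}; once this comparison is in hand the middle exactness follows, and the finiteness of $\kappa$ enters only through the complete case and through the divisibility above. I would sanity-check the whole scheme at $n=1$, where it degenerates to the split sequence $0\to\O^{\ast}\to F^{\ast}\xrightarrow{v}\Z\to 0$.
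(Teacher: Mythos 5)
Your treatment of the left-hand injectivity is correct and is essentially the paper's own argument: excellence gives geometric regularity of $\O\r\hat\O$, N\'eron--Popescu writes $\hat\O$ as a filtered colimit of smooth $\O$-algebras, henselianness (the paper invokes Elkik) lifts the $\kappa$-points to sections $B_i\r\O$, and continuity of $\hat K^M_n$ plus the complete case of \cite{Da18} finishes the chase. The genuine gap is exactly where you located it: exactness at $\hat K^M_nF$. You do not prove it, and the two routes you sketch for closing it do not work. First, your ``equivalent'' reformulation --- that $f\colon \hat K^M_n\O\r\hat K^M_n\hat\O$ be an isomorphism --- is false whenever $\O$ is not complete: already for $n=1$ it reads $\O^{\ast}\r\hat\O^{\ast}$, which is injective but never surjective (take $\O$ the henselization of $\Z_{(p)}$ inside $\Z_p$; then $\O^{\ast}$ is countable while $\Z_p^{\ast}$ is not). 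Since surjectivity of $f$ together with injectivity of $g$ is what your diagram chase needs, this sufficient condition is unavailable in principle, not merely hard to verify. Second, the Artin-approximation scheme for absorbing the defect symbols with entries in $1+\mathfrak{m}^N\hat\O$ is left at the level of ``this should force the defect into the image''; the defect classes genuinely live over $\hat\O$, and no mechanism is given for descending them to $\O$ --- prime-to-$p$ divisibility of $1+\mathfrak{m}$ says nothing about the $p$-primary part, which is the delicate one in mixed characteristic.

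The paper avoids this problem entirely, because the middle and right exactness of
$$\hat{K}^M_n\O\r \hat{K}^M_nF \r K^M_{n-1}\kappa\r 0$$
is already proved for \emph{henselian} (not only complete) discrete valuation rings with finite residue field in \cite[Prop. 2.7]{Da18}; the only content the paper adds is the left-hand injectivity, via the N\'eron--Popescu/Elkik argument you also found. So the correct repair of your proof is not a better approximation argument but a stronger citation: take middle exactness over the henselian ring as input from \cite{Da18}, and keep your ladder only for the injectivity statement.
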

\begin{proof}
The exactness of $\hat{K}^M_n(\O)\r {K}^M_n(F) \r K^M_{n-1}(\kappa)\r 0$ is shown in \cite[Prop. 2.7]{Da18}. It therefore remains to be shown that the map $\hat{K}^M_n(\O)\r \hat{K}^M_n(F)$ is injective. Let $\hat{\O}$ be the completion of $\O$ along the maximal ideal $\mathfrak m$ of $\O$. By \cite[7.8.3(v)]{EGA4II} and since $\O$ is excellent the map $\O\r \hat{\O}$ is geometrically regular. Therefore by N\'eron-Popescu desingularization $\O\r \hat{\O}$ is a filtered colimit of smooth $\O$-algebras $A_i$, i.e. $\varinjlim A_i=\hat{\O}$. By Elkik's theorem (see Theorem \ref{elkikthm} below) the map $\O\r \O/\mathfrak m$ has the right lifting property with respect to smooth maps. In particular for all $i$ the dotted arrow in the diagram 
\[
  \xymatrix{
    \O \ar[rr] \ar[d] &  & \O
    \ar[d]
  \\
  A_i  \ar[r] \ar@{.>}[rru] & \hat{\O} \ar[r] & \O/\mathfrak m
  }
\]
exists giving a split injection $\O\r A_i$. Improved Milnor $K$-theory is a covariant functor from rings to abelian groups and therefore the induced map $\hat{K}^M_n(\O)\r \hat{K}^M_n(A_i)$ is also injective. Since filtered colimits of abelian groups are exact and improved Milnor $K$-theory commutes with filtered colimits (see \cite[Prop. 6 \& Thm. 7]{Ke10}), this implies that the map $\hat{K}^M_n(\O)\r \varinjlim \hat{K}^M_n(A_i)\cong \hat{K}^M_n(\hat{\O})$ is injective. The statement now follows from the commutative diagram

\[
  \xymatrix{
    \hat{K}^M_n(\hat{\O}) \ar@{^{(}->}[r]  & K^M_n(\mathrm{Frac}(\hat{\O}))
  \\
 \hat{K}^M_n(\O)  \ar[r]  \ar@{^{(}->}[u] & K^M_n(F) \ar[u]
  }
\]
and the fact that the upper horizontal map is injective by \cite[Cor. 4.4]{Da18}.
\end{proof}

In the above proof we needed the following lifting property of henselian pairs
with respect to smooth morphisms. 

\begin{theorem}[{Elkik \cite[Sec.~II]{Elkik1973}}] 
\label{elkikthm}
Let $(S, I)$ be a henselian pair. Then $S \to S/I$ has the right lifting
property with respect to smooth maps. That is, any commutative diagram
\begin{equation*} \label{lifthens}  \xymatrix{
A \ar[d] \ar[r] &  S \ar[d]  \\
B \ar[r] \ar@{-->}[ru] &  S/I\
}\end{equation*}
with $A \to B$ smooth (rather than \'etale) admits a lift as in the dotted arrow.
\end{theorem} 

\bibliographystyle{siam}
\bibliography{Bibliografie}
\end{document}